\documentclass[a4paper,12pt,oneside,reqno ]{amsart}

\usepackage[T1]{fontenc}
\usepackage[utf8]{inputenc}
\usepackage{lmodern}
\usepackage[english]{babel}

\usepackage[%
left=2.5cm,       
right=2.5cm,      
top=3.5cm,        
bottom=3.5cm,     
heightrounded,    
bindingoffset=0mm 
]{geometry}
\usepackage{amsrefs}
\usepackage{amsmath}
\usepackage{amssymb}
\usepackage{mathtools}
\usepackage{mathrsfs}
\usepackage[colorlinks=true,linkcolor=blue,urlcolor=blue,citecolor=blue]{hyperref}

\numberwithin{equation}{section}

\usepackage{hyperref} 

\usepackage{tikz}
\usepackage{graphicx}
\usepackage{xcolor}
\usepackage[font=small]{caption}

\usepackage{bookmark}
\usepackage{enumitem}

\theoremstyle{plain}
\newtheorem{theorem}{Theorem}[section]

\newtheorem{proposition}[theorem]{Proposition}
\newtheorem{corollary}[theorem]{Corollary}

\theoremstyle{definition}
\newtheorem{definition}[theorem]{Definition}
\newtheorem{example}[theorem]{Example}
\newtheorem{remark}[theorem]{Remark}
\newtheorem{open.problem}[theorem]{Open Problem}

\newcommand{\N}{\mathbb{N}}
\newcommand{\R}{\mathbb{R}}

\title[Brunn-Minkowski inequality for Schr\"odinger operators]{A Brunn-Minkowski inequality for Schr\"odinger operators with Kato class potentials}
\author[A.\ Carbotti]{Alessandro Carbotti}
\address{Dipartimento di Matematica
	e Fisica ``E. De Giorgi'', Universit\`a del Salento,
	Via Per Arnesano, 73100 Lecce, Italy.}
\email{alessandro.carbotti@unisalento.it}

\date{\today}  

\keywords{Brunn-Minkowski inequality, First Dirichlet eigenvalue, Schr\"odinger operators, Trace class semigroups, Ultracontractivity estimates}

\subjclass[2020]{35E10, 35J25, 35P15, 52A40}

\begin{document}
	\begin{abstract}
		In this paper we prove a Brunn-Minkowski inequality for the first Dirichlet eigenvalue of a Schr\"odinger type operator $\mathcal{H}_V:=-\operatorname{div}(A\nabla)+V$, where $V$ is convex and Kato decomposable, using the trace class property of the generated semigroup. As a consequence, we obtain the log-concavity of the ground state using the ultracontractivity of the semigroup, and also the strong log-concavity under additional assumptions on $\Omega$ and $V$.
	\end{abstract}
	
	\maketitle
	
	\tableofcontents
	
	\section{Introduction}
	\label{sec:intro}
	The Brunn-Minkowski inequality is a classical topic in Geometric Analysis. 
	In its simplest form, it asserts that for any nonempty Borel sets $\Omega_0,\Omega_1 \subset \mathbb{R}^N$,  set for any $r \in [0,1]$ the convex Minkowski sum $\Omega_r:=(1-r)\Omega_0+r\Omega_1$, one has that
	\begin{equation} \label{eq:BM-classical}
		|\Omega_r|^{1/N} \geq (1-r)\,|\Omega_0|^{1/N} + r\,|\Omega_1|^{1/N},
	\end{equation}
	where $|\cdot|$ denotes the Lebesgue measure. In other words, for every nonempty Borel set $\Omega$, the function $\Omega\mapsto |\Omega|^{1/N}$ is concave.  
	Inequality \eqref{eq:BM-classical} has far-reaching implications, including the isoperimetric inequality, concentration phenomena and several other functional inequalities. See \cites{BobkovLedoux:1997, GardnerSurvey:2002} for several applications.

	Inequality \eqref{eq:BM-classical} is a particular case of the Prékopa-Leindler inequality, established in \cites{prekopa,Leindler:1972} and generalized in higher dimension in \cites{BorellPL, Prekopa71}, which can be stated as follows.
	\begin{theorem}
    \label{th:Pltheorem}
		Let $\Omega\subseteq\R^N$ be a measurable and convex set and $f,g,h:\Omega\rightarrow (0,+\infty)$ measurable functions such that
		\begin{equation}
        \label{eq:fgh}
		h((1-r)x+ry)\ge f(x)^{1-r}g(y)^r
		\end{equation}
		for every $x,y\in \Omega$ and every $r\in[0,1]$. Then
		$$
		\int_{\Omega}h(x)dx\ge\left(\int_{\Omega}f(x)dx\right)^{1-r}\left(\int_{\Omega}g(x)dx\right)^r
		$$
	\end{theorem}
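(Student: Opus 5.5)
We prove Theorem~\ref{th:Pltheorem} by the classical route: first establish the one-dimensional case by a transport (or level-set) argument, then induct on the dimension via Fubini. Fix $r\in(0,1)$; the cases $r=0,1$ are trivial. We may assume $\int f$ and $\int g$ are positive and finite, otherwise the right-hand side is $0$ or $+\infty$ appropriately. For the latter case, note that \eqref{eq:fgh} with $y$ fixed forces $\int h=+\infty$ whenever $\int f=+\infty$. Extending $f,g,h$ by zero outside $\Omega$ is harmless because $\Omega$ is convex: if $x,y\in\Omega$ then $(1-r)x+ry\in\Omega$, so \eqref{eq:fgh} holds on all of $\R^N$ with the convention that the right-hand side vanishes off $\Omega\times\Omega$. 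By monotone convergence we may also reduce to bounded $f,g$, truncating them and restricting to bounded subsets.

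\textbf{Step 1 ($N=1$).} By homogeneity, replacing $f,g,h$ by $f/\|f\|_\infty$, $g/\|g\|_\infty$ and $h/(\|f\|_\infty^{1-r}\|g\|_\infty^r)$, we may assume $\|f\|_\infty=\|g\|_\infty=1$. For $t\in(0,1)$ the superlevel sets $\{f>t\}$ and $\{g>t\}$ are nonempty. Condition \eqref{eq:fgh} gives the inclusion
\[
\{h>t\}\supseteq(1-r)\{f>t\}+r\{g>t\}.
\]
The one-dimensional Brunn--Minkowski inequality $|A+B|\ge|A|+|B|$ for nonempty measurable $A,B\subset\R$ is elementary: approximate from inside by compact sets and translate so that $\max A=0=\min B$, which gives $A\cup B\subseteq A+B$. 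It yields
\[
|\{h>t\}|\ge(1-r)|\{f>t\}|+r|\{g>t\}|.
\]
Integrating in $t\in(0,1)$ via the layer-cake formula gives
\[
\int h\ge(1-r)\int f+r\int g\ge\Big(\int f\Big)^{1-r}\Big(\int g\Big)^r,
\]
the last step by the AM--GM inequality. A measurability subtlety arises because $A+B$ need not be Borel; this is handled by using compact inner approximations, or by working with inner measure.

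\textbf{Step 2 (induction).} Write $x=(x',s)$ with $x'\in\R^{N-1}$ and $s\in\R$. For fixed $s_0,s_1$, set $s_r=(1-r)s_0+rs_1$ and define the slices $F(x')=f(x',s_0)$, $G(y')=g(y',s_1)$ and $H(z')=h(z',s_r)$. These satisfy \eqref{eq:fgh} in $\R^{N-1}$, so the induction hypothesis gives
\[
\int_{\R^{N-1}}h(\cdot,s_r)\ge\Big(\int_{\R^{N-1}} f(\cdot,s_0)\Big)^{1-r}\Big(\int_{\R^{N-1}} g(\cdot,s_1)\Big)^r.
\]
The marginals therefore satisfy the hypothesis in dimension one. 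Applying Step 1 and then Fubini concludes the proof.

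The main obstacle is the measurability bookkeeping in Step 1 and in the marginals. Fubini guarantees the marginals are measurable, but the one-dimensional Brunn--Minkowski step needs care when the sets $A+B$ are non-measurable. We would handle this by passing to compact subsets of the superlevel sets, using inner regularity of Lebesgue measure.
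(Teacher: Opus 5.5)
The paper gives no proof of this statement. It quotes it as the classical Prékopa--Leindler inequality and refers to the original papers. Your argument is the standard proof from that literature: the one-dimensional case via superlevel sets, the one-dimensional Brunn--Minkowski inequality and AM--GM, then induction on the dimension by slicing. It is correct in substance, but three details need fixing.

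\begin{enumerate}
\item Fixing $y$ in \eqref{eq:fgh} and integrating in $x$ only gives $\int_\Omega h\ge(1-r)^N g(y)^r\int_\Omega f^{1-r}\,dx$. But $\int_\Omega f^{1-r}$ can be finite while $\int_\Omega f=+\infty$, for instance when $f$ has a local singularity. Drop that sentence: the infinite case is already handled by your truncation $f\wedge M\,\chi_{B_R}$, $g\wedge M\,\chi_{B_R}$ and monotone convergence.

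\item For Lebesgue (not Borel) measurable functions, the sections $f(\cdot,s)$ are measurable only for almost every $s$. So the inequality between marginals holds only off null sets, while your level-set inclusion in Step 1 uses it for \emph{every} $s_0,s_1$. To fix this, take a Borel null set containing the exceptional sets. Replace $h$ by a Borel $h_1\ge h$ and $f,g$ by Borel $f_1\le f$, $g_1\le g$, each equal to the original a.e., setting $h_1=+\infty$ and $f_1=g_1=0$ on that null set. This preserves \eqref{eq:fgh} and all integrals; every section is then Borel, and Tonelli applies to $[0,+\infty]$-valued functions.

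\item The induction must be run on the version for nonnegative functions on all of $\R^{d}$, with $f,g$ allowed to vanish and $h$ allowed to take the value $+\infty$. Truncated slices are not positive on a common convex set, so the theorem as stated cannot serve as the induction hypothesis. Your reductions already lead to that version, so this only needs to be stated.
\end{enumerate}

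Finally, read literally, the statement is much weaker than what you prove. Requiring \eqref{eq:fgh} for every $r\in[0,1]$ includes $r=0$ and $r=1$, i.e.\ $h\ge f$ and $h\ge g$ on $\Omega$. Hence $\int_\Omega h\ge\max\{\int_\Omega f,\int_\Omega g\}\ge(\int_\Omega f)^{1-r}(\int_\Omega g)^r$ with no work. Your proof gives the genuine fixed-$r$ inequality, which is the form the paper actually needs. It is used for the log-concavity of $r\mapsto Z_n(r,t)$ in the proof of Theorem \ref{teo:maintheorem}, and it underlies Theorem \ref{th:prekopaint}.
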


		Theorem \ref{th:Pltheorem} has been crucial in the study of concavity properties of solutions to elliptic and parabolic equations. In particular, condition \eqref{eq:fgh} when $f=g=h$ is the definition of \textit{log-concave} functions. In recent years, numerous extensions and refinements of \eqref{eq:BM-classical} and Theorem \ref{th:Pltheorem} have been proposed for other geometric functionals arising in Calculus of variations and for other notions of concavity. See \cites{Colesanti05, GardnerSurvey:2002} and the references therein. Apart from the euclidean setting, in the Gauss space the following inequality 
	\begin{equation}
    \label{eq:BMgaussvol}
	\gamma(\Omega_r)\ge \gamma(\Omega_0)^{1-r}\gamma(\Omega_1)^r,
	\end{equation}
	where $\gamma$ denotes the standard Gaussian measure $\gamma:=\frac{e^{-|\cdot|^2/2}}{(2\pi)^{N/2}}\mathcal{L}^N$, 
	expresses that the Gaussian volume enjoys a \textit{weak} Brunn-Minkowski inequality which emphasizes the log-concavity of the measure $\gamma$. With a little abuse of notation, in the sequel we will refer to $\gamma$ to indicate both the Gaussian measure and its density with respect to the Lebesgue measure.
    
    Inequality \eqref{eq:BMgaussvol} holds true as an immediate application of Theorem \ref{th:Pltheorem} and has been proved with other methods by Borell in \cite{borell}. An improvement of inequality \eqref{eq:BMgaussvol} has been given in \cite{EskMos21}, where in the class of symmetric sets with respect to the origin the authors prove \eqref{eq:BM-classical} with the Lebesgue measure replaced by the Gaussian one. Moving to the spectral framework, let $w\in\left\{1,\gamma\right\}$, and let $\lambda_{1,w}(\Omega)$ be the least real number $\lambda$ such that the problem
    \begin{equation}
    \begin{cases}
        -\operatorname{div}(w\nabla v)=\lambda wv\quad\text{in}\quad\Omega \\
        v=0\quad\text{in}\quad\partial\Omega
    \end{cases}
    \end{equation}
    admits a nontrivial solution $u$. A Brunn-Minkowski inequality for $\lambda_{1,w}(\Omega)$ goes as follows
    	\begin{equation}
	\label{eq:eigenBM}
	\lambda_{1,w}(\Omega_r)\le (1-r)\lambda_{1,w}(\Omega_0)+r\lambda_{1,w}(\Omega_1).
	\end{equation}
 In the case $w=1$ the convexity of the first eigenvalue of the Dirichlet Laplacian \eqref{eq:eigenBM} is equivalent to the concavity of the function $\Omega\mapsto\lambda^{-1/2}_{1}(\Omega)$ proved in \cite{BrLi76}. If $w=\gamma$ inequality \eqref{eq:eigenBM} deals with the convexity of the first Dirichlet eigenvalue 
    of the Ornstein-Uhlenbeck operator $-\Delta_\gamma:=-\Delta+x\cdot\nabla$ recently proved in \cite[Theorem 1.2]{CFLS}, where the authors also address the case of equality in \eqref{eq:eigenBM}. We want to notice that $\lambda_{1,\gamma}(\Omega)$ also enjoys a Faber-Krahn inequality, see \cites{BeChFe, CCLP24a}. The Brunn-Minkowski inequality for $\lambda_{1,\gamma}(\Omega)$ can also be restated for the first Dirichlet eigenvalue of the Schr\"odinger operator $\mathcal{H}:=-\Delta+\frac{|\cdot|^2}{4}-\frac N2$ using the unitary transformation \eqref{eq:unitarytransform} defined in the following Section \ref{sec:proof}, which induces an isospectrality between $-\Delta_\gamma$ and $\mathcal{H}$. This connection highlights the relevance of Schr\"odinger-type operators, whose spectral properties play a central role both in modern analysis and mathematical physics. A notable example illustrating this connection is the proof of the fundamental gap conjecture by Andrews and Clutterbuck in \cite{AndClu}.
	
	In the spirit of exploring geometric properties of the spectrum of the Schr\"odinger operator, we establish in this article a Brunn-Minkowski-type inequality for the first Dirichlet eigenvalue of a Schr\"odinger type operator: Namely, we consider the following boundary value problem
		\begin{equation}
		\begin{cases}
			\mathcal{H}_Vu=\lambda_{1,V}(\Omega) u\quad\text{in}\quad\Omega \\
			u=0\quad\text{in}\quad\partial\Omega
		\end{cases}
	\end{equation}
	being
	$$
	\mathcal{H}_V:=-\operatorname{div}(A\nabla)+V
	$$
	where $A$ is a constant symmetric matrix such that $A\xi\cdot\xi\ge a_1|\xi|^2$ for every $\xi\in\R^N$ and for some $a_1>0$. Denoting $k^+:=\max\{k,0\}$ and $k^-:=\max\{-k,0\}$ for each $k\in\R$, we also assume that the potential $V:\Omega\rightarrow\R$ satisfies the following assumptions: 
	\begin{enumerate}[label=\textbf{A.\arabic*}]
		\item\label{As1} $V$ is convex in $\Omega$.
		\item\label{As2} $V$ is \textit{Kato decomposable}, namely $V^+\in L^1_{\rm loc}(\Omega)$ and $V^-\in\mathcal{K}(\Omega)$, where $\mathcal{K}(\Omega)$ denotes the Kato class. See Definition \ref{def:katoclass} below.
		\item\label{As3}$e^{-tV}\in L^1(\Omega)$ for every $t>0$.
	\end{enumerate}
    With this notation in hand, we can state our main result
		\begin{theorem}
		\label{teo:maintheorem}
		Let $\Omega_0, \Omega_1$ be non-empty convex sets in $\R^N$. For every $r\in[0,1]$ we set
		$$
		\Omega_r:=(1-r)\Omega_0+r\Omega_1.
		$$
		Then, if assumptions \ref{As1} is satisfied in $\displaystyle\bigcup_{r\in[0,1]}\Omega_r$ and assumptions \ref{As2}, \ref{As3} are satisfied in $\Omega_r$ for every $r\in[0,1]$ it holds that
		$$
		\lambda_{1,V}(\Omega_r)\le (1-r)\lambda_{1,V}(\Omega_0)+r\lambda_{1,V}(\Omega_1).
		$$
	\end{theorem}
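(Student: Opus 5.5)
The plan is to follow the heat-kernel / Prékopa--Leindler strategy used by Brascamp and Lieb, using the trace class property of the Dirichlet semigroup $e^{-t\mathcal{H}_V}$ on each $\Omega_r$. The key identity is
\[
\lambda_{1,V}(\Omega)=-\lim_{t\to+\infty}\frac1t\log \operatorname{Tr}\bigl(e^{-t\mathcal{H}_V^\Omega}\bigr)=-\lim_{t\to+\infty}\frac1t\log\int_\Omega p^\Omega_t(x,x)\,dx .
\]
Here $p^\Omega_t$ is the Dirichlet heat kernel of $\mathcal{H}_V$ in $\Omega$. To justify it I would argue as follows. Assumptions \ref{As2} and \ref{As3} give that the semigroup is ultracontractive and trace class for every $t>0$, with $\operatorname{Tr}(e^{-t\mathcal{H}_V})=\sum_k e^{-t\lambda_k}<\infty$. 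Writing $\sum_k e^{-t\lambda_k}=e^{-t\lambda_1}\bigl(m_1+\sum_{\lambda_k>\lambda_1}e^{-t(\lambda_k-\lambda_1)}\bigr)$ and using dominated convergence for the sum (it is bounded by its value at $t=1$ times $e^{\lambda_1}$) yields the limit. Therefore it suffices to prove, for every $t>0$,
\[
\operatorname{Tr}\bigl(e^{-t\mathcal{H}_V^{\Omega_r}}\bigr)\ge \operatorname{Tr}\bigl(e^{-t\mathcal{H}_V^{\Omega_0}}\bigr)^{1-r}\operatorname{Tr}\bigl(e^{-t\mathcal{H}_V^{\Omega_1}}\bigr)^{r}.
\]
Taking $-\frac1t\log$ and letting $t\to\infty$ then gives the claim.

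For the trace inequality I would use the Trotter product formula (the Feynman--Kac representation). Write $e^{-t\mathcal{H}_V}=\lim_n\bigl(e^{-\frac tn L}e^{-\frac tn V}\bigr)^n$, where $L=-\operatorname{div}(A\nabla)$ with Dirichlet conditions. The kernel of the $n$-th approximant evaluated on the diagonal is
\[
K^\Omega_n(x_0,x_0)=\int_{\Omega^{n-1}}\prod_{j=1}^{n} g_{t/n}(x_{j}-x_{j-1})\,\chi_\Omega(x_j)\,e^{-\frac tn V(x_j)}\,dx_1\cdots dx_{n-1},\qquad x_n=x_0 .
\]
Here $g_s$ is the Gaussian kernel of $e^{-sL}$ on $\R^N$ (since $A$ is a constant matrix), and the Dirichlet condition is imposed by inserting $\chi_\Omega$ at each time slice; as $n\to\infty$ this converges to the killed kernel. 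Each factor is log-concave in the joint variables $(x_0,\dots,x_{n-1})$: $g_s$ is a Gaussian in a difference of variables, $e^{-V}$ is log-concave by \ref{As1}, and $\chi_{\Omega_r}\bigl((1-r)x+ry\bigr)\ge\chi_{\Omega_0}(x)^{1-r}\chi_{\Omega_1}(y)^r$ holds by the definition of the Minkowski combination. Since $V$ is convex on $\bigcup_r\Omega_r$, the integrands $F_r$ on $(\R^N)^n$ satisfy the hypothesis \eqref{eq:fgh} of Theorem \ref{th:Pltheorem}, namely
\[
F_r\bigl((1-r)X+rY\bigr)\ge F_0(X)^{1-r}F_1(Y)^r .
\]
Applying Theorem \ref{th:Pltheorem} in dimension $nN$ (extending by zero outside the sets, or using the strictly positive version after approximating) gives the trace inequality for each approximant. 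Passing to the limit $n\to\infty$ then gives it for $e^{-t\mathcal{H}_V}$.

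The main obstacle is the limit step. One needs the traces of the Trotter approximants with the inserted indicators to converge to $\operatorname{Tr}(e^{-t\mathcal{H}_V^{\Omega}})$, and this requires $V^-$ only in the Kato class and $V^+$ only in $L^1_{\rm loc}$. I would first handle bounded truncations $V_k=\max(\min(V,k),-k)$, which remain convex only in a modified sense. A better route is to instead approximate from inside: take $\Omega$ convex and exhaust it by bounded convex sets, and replace $V$ by $\max(V,-k)$, which is still convex. For these bounded, convex, bounded-below potentials the Feynman--Kac formula gives pointwise convergence of the diagonal kernels. Then I would use monotone convergence in $k$ (the traces are monotone in $V$) together with Kato-class ultracontractivity bounds and \ref{As3} to dominate $\int_\Omega p_t(x,x)\,dx\le C_t\int_\Omega e^{-tV/2}$, which justifies exchanging the limits. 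Finally, convergence of the eigenvalues along these approximations follows from monotone convergence of the quadratic forms.
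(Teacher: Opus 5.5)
Your proposal is correct and follows essentially the same route as the paper: the Trotter product formula with $\chi_{\Omega_r}$ inserted at each time slice, the Pr\'ekopa--Leindler inequality applied to the log-concave integrand to get $Z(r,t)\ge Z(0,t)^{1-r}Z(1,t)^{r}$, and then the trace-class identity $\lambda_{1,V}(\Omega_r)=-\lim_{t\to\infty}t^{-1}\log Z(r,t)$. The only difference is that you explicitly justify the passage $n\to\infty$ in the traces (exhaustion by bounded convex sets, domination, domain monotonicity) and the $t\to\infty$ asymptotics, whereas the paper simply asserts both.
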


    A relevant consequence is the following Theorem which asserts the log-concavity of the ground state

    \begin{theorem}
\label{th:logconcdireig}
	Let $\Omega\subset\R^N$ be an open and connected convex set and let $\psi_{1,V}$ be the first Dirichlet eigenfunction of $\mathcal{H}_V$ in $\Omega$. Then $\psi_{1,V}$ is strictly positive in $\Omega$ and log-concave in $\Omega$.
\end{theorem}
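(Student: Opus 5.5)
We plan to derive the log-concavity of $\psi_{1,V}$ from the log-concavity of the Dirichlet heat kernel of $\mathcal{H}_V$ in $\Omega$, combined with ultracontractivity of the semigroup $T_t:=e^{-t\mathcal{H}_V}$. Let $p_t(x,y)$ denote the Dirichlet heat kernel of $\mathcal{H}_V$ on $\Omega$. Under \ref{As2} the semigroup is positivity preserving, and it is ultracontractive, so $p_t$ is bounded and continuous. Under \ref{As3} it is trace class. Hence we have the spectral expansion
$$p_t(x,y)=\sum_{k\ge1}e^{-t\lambda_{k,V}}\psi_{k,V}(x)\psi_{k,V}(y).$$
Strict positivity of $\psi_{1,V}$ follows from the standard Perron--Frobenius argument. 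Since $\Omega$ is connected, $p_t(x,y)>0$ for $x,y\in\Omega$, so $T_t$ is positivity improving, and the ground state is simple and can be chosen with $\psi_{1,V}>0$. The same conclusion follows from the strong maximum principle together with $|\psi_{1,V}|$ being a minimizer of the Rayleigh quotient.

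The core step is to show that $(x,y)\mapsto p_t(x,y)$ is log-concave on $\Omega\times\Omega$ for every $t>0$. We will use a Trotter product approximation:
$$T_t=\lim_{n\to\infty}\bigl(e^{-\frac tn V}\,e^{\frac tn\operatorname{div}(A\nabla)}_{\Omega}\bigr)^n .$$
We first take this on a truncated potential $V_M:=\min(V,M)$, which is still convex where finite, and pass $M\to\infty$ by monotone convergence. The Kato class assumption on $V^-$ gives the needed uniform bounds.

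Two ingredients are needed for the Trotter approximation.
\begin{enumerate}
\item The Dirichlet heat kernel of $-\operatorname{div}(A\nabla)$ on a convex $\Omega$ is log-concave in $(x,y)$. After the linear change of variables $x\mapsto A^{-1/2}x$, which preserves convexity, this reduces to the classical Brascamp--Lieb result for the Dirichlet Laplacian on convex sets.
\item $e^{-\frac tn V(x)}$ is log-concave by \ref{As1}.
\end{enumerate}
The kernel of the $n$-fold product is an iterated integral
$$\int\prod_j k(z_j,z_{j+1})\,e^{-\frac tnV(z_j)}\,dz_1\cdots dz_{n-1},$$
whose integrand is jointly log-concave in all variables. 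By Theorem \ref{th:Pltheorem}, marginals of log-concave functions are log-concave. So each approximating kernel is log-concave in $(x,y)$, and log-concavity passes to pointwise limits. Pointwise convergence of the kernels is justified by ultracontractivity, which upgrades $L^2$ strong convergence to convergence of kernels. We expect this justification, together with the truncation $M\to\infty$ and possibly exhausting $\Omega$ by smooth convex sets, to be the main technical obstacle.

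Finally, the trace-class expansion and simplicity of $\lambda_{1,V}$ give
$$\lim_{t\to\infty}e^{t\lambda_{1,V}}p_t(x,y)=\psi_{1,V}(x)\psi_{1,V}(y)$$
locally uniformly. The error is bounded by $e^{-t(\lambda_{2,V}-\lambda_{1,V})}\,\|p_{1}\|_\infty$-type terms, using ultracontractivity to control $\sup|\psi_{k,V}|$ by $e^{\lambda_{k,V}}\|p_1\|_\infty^{1/2}$. Since log-concavity is preserved under multiplication by constants and pointwise limits, $\psi_{1,V}(x)\psi_{1,V}(y)$ is log-concave on $\Omega\times\Omega$. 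Setting $x=y$ shows that $\psi_{1,V}^2$ is log-concave, hence $\psi_{1,V}$ is log-concave, using strict positivity so that $\log\psi_{1,V}$ is finite.

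An alternative route to the same conclusion is to plug the Brunn--Minkowski mechanism of Theorem \ref{teo:maintheorem}, via Prékopa--Leindler applied to $p_t$, directly into the limit above.
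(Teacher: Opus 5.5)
Your overall strategy works, but one step fails as written: the truncation $V_M:=\min\{V,M\}$ does not preserve convexity. The minimum of a convex function and a constant is generally not convex. For example, with $V(x)=|x|^2$ and $M=1$ you get $V_M(0)=0$ and $V_M(2e_1)=1$, but $V_M(e_1)=1>\tfrac12$. So $e^{-\frac tn V_M}=\max\{e^{-\frac tn V},e^{-\frac tn M}\}$ is not log-concave, and the Trotter integrand built from $V_M$ is not log-concave either. Pr\'ekopa--Leindler then says nothing about the approximating kernels, and there is no log-concavity to pass to the limit $M\to\infty$.

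The truncation is not needed. Since $V\ge c_{V,\Omega}$, $e^{-\frac tn V}$ is already a bounded multiplication operator. Kato's product formula for the form sum of the nonnegative self-adjoint operators $-\operatorname{div}(A\nabla)^\Omega$ and $V-c_{V,\Omega}$ then applies directly. The form domain is dense because $V$, being convex and finite on $\Omega$, is locally bounded there. If you want finite approximants anyway, use a convexity-preserving regularization such as $V_M(x):=\inf_{z\in\Omega}\{V(z)+M|x-z|\}$. It is convex, finite and Lipschitz, and increases to $V$ on $\Omega$.

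With that repaired, your route and the paper's rest on the same two ingredients:
\begin{itemize}
\item joint log-concavity of $p^\Omega_V(t,\cdot,\cdot)$, obtained from a Trotter product and Pr\'ekopa--Leindler;
\item large-time spectral asymptotics, controlled by the trace-class property and ultracontractivity.
\end{itemize}
They differ in two places.

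\textbf{The Trotter product.} You interleave the Dirichlet kernel of $\operatorname{div}(A\nabla)$ on $\Omega$, which is log-concave by Brascamp--Lieb after $x\mapsto A^{-1/2}x$, with $e^{-\frac tn V}$. The paper interleaves the free Gaussian $p_A$ with $e^{-\frac tn V}\chi_\Omega$. That is exactly how Brascamp--Lieb prove log-concavity of the Dirichlet kernel, so the paper's version is one step more self-contained.

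\textbf{The limit.} You take it at the kernel level, $e^{t\lambda_{1,V}}p_t(x,y)\to\psi_{1,V}(x)\psi_{1,V}(y)$, and restrict to the diagonal. Your Cauchy--Schwarz bound in fact gives uniform convergence; the sharp constant is $\|\psi_{k,V}\|_\infty\le e^{\lambda_{k,V}/2}\|p_1\|_\infty^{1/2}$, which changes nothing. The paper instead integrates the kernel against a log-concave datum $f\ge0$, applies Pr\'ekopa's marginal theorem, and uses $(f,\psi_{1,V})>0$, which comes from strict positivity.

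Your version avoids the auxiliary datum. The paper's formulation, however, only really needs that $e^{-t\mathcal{H}^\Omega_V}$ maps log-concave functions to log-concave functions. That property passes to strong $L^2$ limits of the Trotter products, via a.e.\ convergence along a subsequence plus continuity of $e^{-t\mathcal{H}^\Omega_V}f$. So it substantially softens the pointwise kernel-convergence issue you flag as the main obstacle.
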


	Some comments are in order. Our approach relies on the pioneering papers by Brascamp and Lieb \cites{Lieb, BrLi76}. The assumption of taking constant second order coefficients is sharp in order to ensure the log-concavity of the heat kernel. See \cite[Theorem 1.2, Proposition 1.3]{Koles01} and Section \ref{sec:proof}. Concerning the potential term $V$ assumptions \ref{As1}, \ref{As2} and \ref{As3} are classical. More specifically, Assumption \ref{As1} ensures that the heat kernel of $\mathcal{H}_V$ is log-concave in the spatial variables; Assumption \ref{As2} guarantees that the heat kernel enjoys upper Gaussian estimates; finally, Assumption \ref{As3} implies that the semigroup generated in $L^2(\Omega)$ by $-\mathcal{H}_V$ with Dirichlet boundary conditions in $\Omega$ is trace class and so the spectrum of $\mathcal{H}_V$ in $L^2(\Omega)$ is discrete. We refer to \cite{MazShu} for other compactness criteria ensuring the discreteness of the spectrum. Moreover, Assumptions \ref{As1} and \ref{As3} forces the potential $V$ to be bounded below, ensuring that the associated quadratic form is also bounded below. We observe that one can also address the problem of optimizing the first eigenvalue $\lambda_{1,V}(\Omega)$ with respect to the potential $V$ under the constraint given by the $L^1(\Omega)$ norm of $e^{-tV}$ for some $t>0$, as recently done in \cite{Frank25}. It is noteworthy that under the above assumptions on $V$, the first eigenvalue is allowed to be negative, a situation that can clearly occur in the case of sign changing or negative potentials. Indeed, the boundedness from below of the potential forces the spectrum to lie in a right-half line. We notice that also in these cases one can obtain a positive spectrum providing the existence of nonnegative solutions to the Poisson equation for $\mathcal{H}_V$ with measure datum or pointwise lower bounds on the potential in terms of some barrier functions as done in \cites{BuPoOr22, BraFraRuf18}. For a more complete treatment on Schr\"odinger semigroups we refer the interested reader to the fundamental work \cite{Simon82}.

	The rest of the paper is organized as follows. 
	In Section~\ref{sec:preliminaries} we introduce the notation, the geometric setting, and the functional framework required for our analysis.
	In Section~\ref{sec:proof} we prove Theorem \ref{teo:maintheorem} and apply it to prove Theorem \ref{th:logconcdireig} exploiting the ultracontractivity of the semigroup generated by $-\mathcal{H}_V$ with homogeneous Dirichlet boundary conditions. We conclude the paper by proving that if both $\Omega$ and $V$ satisfy stronger regularity and convexity assumptions the ground state $\psi_{1,V}$ enjoys a strong log-concavity.

	\paragraph*{\bf Acknowledgements}
    The author warmly thanks Luigi Negro and Diego Pallara for many useful discussions and suggestions. 
    
	A.C. is member of GNAMPA of the Italian Istituto Nazionale di Alta Matematica (INdAM) and has been partially supported
	by the INdAM - GNAMPA 2026 Project ``Analisi variazionale per operatori locali e nonlocali
	possibilmente singolari o degeneri''. 
	\vspace{.5cm}
	\section{Preliminary Results}
	\label{sec:preliminaries} 
	
	In the sequel we assume that $D\subseteq\R^N$ is a nonempty open set and that $\Omega\subset\R^N$ is a nonempty connected convex set in $\R^N$. We will also denote with $L^\Omega$ the realization of a second order operator endowed with homogeneous Dirichlet boundary conditions in $\Omega$.

    To properly define the Schr\"odinger operator and the associated semigroup we now define the Kato class, introduced by Kato in \cite{Kato72} to manage singular perturbations of the laplacian.
	
		\begin{definition}
		\label{def:katoclass}
	Let $N\ge 2$. We say that the function $W$ belongs to the \textit{Kato class} $\mathcal{K}(D)$ if 
		$$
		\lim_{r\to0^+}\sup_{x\in D}\int_{B_r(x)\cap D}|W(y)|\mathbb{G}_N(x-y)dy=0,
		$$
		where $\mathbb{G}_N$ denotes the Green function for the Laplacian in $\R^N$.
        
        Otherwise, if $N=1$ we say $W\in\mathcal{K}(D)$ if 
        $$
        \sup_{x\in D}\int_{B_1(x)\cap D}|W(y)|dy<\infty.
        $$
	\end{definition}

    By using H\"older inequality one can easily check that $L^p(D)\subset\mathcal{K}(D)$ for every $p>N/2$ if $N\ge 2$, and for every $p\ge 1$ if $N=1$.

     \subsection{Trace class semigroups}

We recall here some well known facts about trace class operators. We refer the interested reader e.g. to \cite[Chapt. 5, Sec. 6]{Davies07}. Let $H$ be an infinite dimension separable Hilbert space and let $\mathcal{B}(H)$ denote the algebra of bounded linear operators on $H$. For any operator $T \in \mathcal{B}(H)$, we define its absolute value as $|T|:=\sqrt{T^*T}$.

\begin{definition}[Trace Class Operator]
An operator $T \in \mathcal{B}(H)$ is said to be \textit{trace class} (or strictly nuclear) if, for some orthonormal basis $\{e_k\}_{k=1}^\infty$ of $H$, the following sum converges:
$$\sum_{k=1}^\infty (|T| e_k, e_k)_H < \infty.$$
\end{definition}

If $T$ is trace class, the value of the sum does not depend on the choice of the orthonormal basis. We denote the space of all trace class operators by $\mathcal{S}_1(H)$. For any $T\in\mathcal{S}_1(H)$, the \textit{trace} of $T$ is defined as:
$$\operatorname{Tr}(T) = \sum_{k=1}^\infty (T e_k, e_k)_H.$$
This series converges absolutely and is independent of the chosen basis. Trace class operators can be approximated in the $\mathcal{B}(H)$ norm by finite rank operators, so they are compact operators.

Now, let $(T(t))_{t \ge 0}$ be a selfadjoint strongly continuous $C_0$-semigroup of bounded linear operators on $H$, and let $-L$ with domain $D(L)$ be its infinitesimal generator. When the generator is defined we denote by $(e^{-tL})_{t>0}$ the associated semigroup.

If $-L$ is the infinitesimal generator of a self-adjoint semigroup
$(e^{-tL})_{t>0}$ and $L$ has compact resolvent, then its spectrum
consists of isolated eigenvalues of finite multiplicity, which we denote by
$\{\lambda_k\}_{k\in\mathbb N}$.

To link the trace of the semigroup $T(t)$ directly to the eigenvalues of its generator $-L$, we employ the Spectral Mapping Theorem. See e.g. \cite[Chapt. IV]{EngelNagel00}. 

\begin{theorem}[Spectral Mapping Theorem for Point Spectrum]
\label{th:specmapth}
For a strongly continuous semigroup $(e^{-tL})_{t>0}$ generated by $-L$, the point spectrum obeys the following relation:
\begin{equation}
\label{eq:specmaptheo}
e^{t \sigma_p(-L)} \subseteq \sigma_p(e^{-tL})\setminus \{0\}.
\end{equation}
If $(e^{-tL})_{t>0}$ is also compact, the equality holds in \eqref{eq:specmaptheo}.
\end{theorem}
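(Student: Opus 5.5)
The plan is the standard semigroup-theoretic argument. First, for $\lambda\in\sigma_p(-L)$ with eigenvector $u\in D(L)$, so that $-Lu=\lambda u$, I will show that $e^{-tL}u=e^{t\lambda}u$ for every $t>0$. Consider $v(t):=e^{-t\lambda}e^{-tL}u$. Since $u\in D(L)$, the semigroup property gives $v'(t)=e^{-t\lambda}e^{-tL}(-L-\lambda)u=0$, with $v(0)=u$. Hence $e^{-tL}u=e^{t\lambda}u$. Because $u\neq0$ and $e^{t\lambda}\neq0$, this yields the inclusion $e^{t\sigma_p(-L)}\subseteq\sigma_p(e^{-tL})\setminus\{0\}$.

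For the reverse inclusion under compactness, I fix $t>0$ and $\mu\in\sigma_p(e^{-tL})\setminus\{0\}$. Let $E:=\ker(e^{-tL}-\mu)$. Since $e^{-tL}$ is compact and $\mu\neq0$, the space $E$ is finite dimensional. Moreover $E$ is invariant under every $e^{-sL}$, because these operators commute with $e^{-tL}$.

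Restricting to $E$, the family $S(s):=e^{-sL}|_E$ is a strongly continuous semigroup on a finite dimensional space. It is therefore uniformly continuous, so $S(s)=e^{sB}$ for some matrix $B$. Since $E$ is invariant and the orbits are differentiable, $E\subseteq D(L)$ and $B=-L|_E$.

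Now $e^{tB}=\mu I$ on $E$. Take an eigenvector $w$ of $B$ with $Bw=\beta w$, which exists by working over $\mathbb{C}$. Then $-Lw=\beta w$ and $e^{t\beta}=\mu$. This shows $\mu\in e^{t\sigma_p(-L)}$.

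The main obstacle is the claim that $E\subseteq D(L)$ with $B=-L|_E$. This follows because for $x\in E$ the difference quotient $s^{-1}(e^{-sL}x-x)=s^{-1}(e^{sB}-I)x$ converges to $Bx$. In the self-adjoint setting of the paper one could instead use the spectral theorem directly, but the argument above is general.
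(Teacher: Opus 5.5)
Your proof is correct. Riesz theory gives $\dim E<\infty$ for $\mu\neq0$, and commutation gives invariance of $E$. A strongly continuous semigroup on a finite-dimensional space is norm continuous, hence equal to $e^{sB}$. The difference quotients then show $E\subseteq D(L)$ with $-L|_E=B$. Finally, an eigenvector of $B$ over $\mathbb{C}$ produces $\beta\in\sigma_p(-L)$ with $e^{t\beta}=\mu$.

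The paper gives no proof of its own; it only cites Engel--Nagel, Chapter IV. There equality is established for \emph{every} strongly continuous semigroup, by a different route. If $e^{-tL}x=e^{t\lambda}x$ with $x\neq0$, then $s\mapsto e^{-\lambda s}e^{-sL}x$ is continuous, $t$-periodic and not identically zero. So some Fourier coefficient $x_n:=\frac1t\int_0^t e^{-\lambda_n s}e^{-sL}x\,ds$, with $\lambda_n:=\lambda+2\pi i n/t$, is nonzero. The standard identity $(-L-\lambda_n)\int_0^t e^{-\lambda_n s}e^{-sL}x\,ds=e^{-\lambda_n t}e^{-tL}x-x=0$ (using $e^{-\lambda_n t}=e^{-\lambda t}$) then gives $-Lx_n=\lambda_n x_n$.

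Your reduction to linear algebra is more elementary and uses exactly the hypothesis as stated; only compactness of $e^{-tL}$ at the fixed $t$ is needed. It also yields that the whole eigenspace $\ker(e^{-tL}-\mu)$ lies in $D(L)$, which can fail without compactness. The Fourier-coefficient argument instead shows that the compactness assumption in the paper's formulation is superfluous. Your method cannot see this, since it hinges on $\dim E<\infty$.
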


A useful criterion to satisfy the trace class property is given by the following Proposition.

\begin{proposition}[Trace of a self-adjoint semigroup]
\label{prop:trace-semigroup}
Let $L$ be a self-adjoint operator on a separable Hilbert space $H$. Assume that $L$ has compact resolvent and that $L-\omega I$ is accretive for some $\omega\in\R$. Let
$\{\lambda_k\}_{k\in\mathbb N}$ be the sequence of eigenvalues, repeated according
to their multiplicity. Then, for every $t>0$,
\begin{equation}
\label{eq:trcleq}
e^{-tL}\in \mathcal S_1(H)
\quad\Longleftrightarrow\quad
\sum_{k=1}^{\infty}e^{-t\lambda_k}<\infty.
\end{equation}
Whenever the equivalent condition in \eqref{eq:trcleq} hold, the trace of the semigroup $(e^{-tL})_{t>0}$ is given by
$$
\operatorname{Tr}(e^{-tL})
   =\sum_{k=1}^{\infty}e^{-t\lambda_k}.
$$
In particular, $(e^{-tL})_{t>0}$ is a trace class semigroup if and
only if the above series converges for every $t>0$.
\end{proposition}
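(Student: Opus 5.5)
We argue through the spectral theorem for the self-adjoint operator $L$. The aim is to express both the trace class property and the trace itself in terms of the eigenvalues of $L$. Since $L$ has compact resolvent, there is an orthonormal basis $\{e_k\}_{k\in\mathbb N}$ of $H$ consisting of eigenvectors, with $Le_k=\lambda_k e_k$. The eigenvalues $\lambda_k$ are real, have finite multiplicity, and satisfy $\lambda_k\to+\infty$. Because $L-\omega I$ is accretive, we have
$$
\lambda_k=(Le_k,e_k)_H\ge\omega
$$
for every $k$. Hence $L$ is bounded from below, $-L$ generates a self-adjoint contraction-type $C_0$-semigroup with $\|e^{-tL}\|\le e^{-\omega t}$, and by functional calculus $e^{-tL}e_k=e^{-t\lambda_k}e_k$. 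One can also view this last identity as a consequence of Theorem \ref{th:specmapth}: compactness of $e^{-tL}$ follows from $e^{-t\lambda_k}\to0$, and then every nonzero eigenvalue of $e^{-tL}$ is of the form $e^{-t\lambda_k}$. Functional calculus, however, gives the diagonal form directly.

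Next we compute $|e^{-tL}|$. Since $e^{-tL}$ is self-adjoint and positive (its eigenvalues $e^{-t\lambda_k}$ are all positive), we have
$$
|e^{-tL}|=\sqrt{(e^{-tL})^*e^{-tL}}=\sqrt{e^{-2tL}}=e^{-tL}.
$$
Alternatively, one checks this on the basis: both operators are diagonal in $\{e_k\}$ with entries $e^{-t\lambda_k}$.

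We then test the definition of trace class on the basis $\{e_k\}$ itself, which gives
$$
\sum_k(|e^{-tL}|e_k,e_k)_H=\sum_k e^{-t\lambda_k}.
$$
If this series converges, $e^{-tL}\in\mathcal S_1(H)$ by definition. Conversely, if $e^{-tL}$ is trace class, the sum is finite for every orthonormal basis, as recalled after the definition, and in particular for $\{e_k\}$; so the series converges. This proves the equivalence \eqref{eq:trcleq}. The trace formula then follows by evaluating $\operatorname{Tr}$ on the same basis, using basis independence:
$$
\operatorname{Tr}(e^{-tL})=\sum_k(e^{-tL}e_k,e_k)_H=\sum_k e^{-t\lambda_k}.
$$
The final assertion about the semigroup is the equivalence applied for each $t>0$.

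The only delicate point is the converse direction: the definition is phrased with "for some orthonormal basis". We therefore need the standard fact that $\sum_k(|T|f_k,f_k)$ is independent of the orthonormal basis $\{f_k\}$ for positive $T$, which the paper recalls. To make the argument self-contained, we can write $(|T|f,f)=\||T|^{1/2}f\|^2$. Then Parseval's identity and Tonelli's theorem allow the double sum $\sum_{j,k}|(|T|^{1/2}f_k,e_j)|^2$ to be reordered, and summing in either order shows the value is the same for the bases $\{f_k\}$ and $\{e_j\}$.
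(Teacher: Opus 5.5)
Your proof is correct and is essentially the paper's argument. The paper combines Theorem \ref{th:specmapth} with the characterization of positive trace class operators through the summability of their eigenvalues, citing Davies; you unpack that citation by diagonalizing $e^{-tL}$ via the functional calculus, noting $|e^{-tL}|=e^{-tL}$, and proving basis independence with the $\||T|^{1/2}f\|^2$ and Tonelli argument. One small advantage of your version is that the functional calculus also tracks multiplicities, which the set-level spectral mapping theorem does not record.
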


\begin{proof}
Using Theorem \ref{th:specmapth} for self-adjoint operators and the characterization of positive
trace class operators; see
\cite[Lemma 5.6.2 and Section 7.2]{Davies07}, the claim plainly follows.
\end{proof}

\begin{remark}
    Let $H=L^2(D)$ and let $\mathfrak{a}:D(\mathfrak{a})\times D(\mathfrak{a})\rightarrow \mathbb{C}$ be a symmetric, closed, bounded from below and densely defined sesquilinear form with $D(\mathfrak{a})\subset H$, and $L$ the operator associated to $\mathfrak{a}$ by the formula
    $$
    (Lu,v)_H:=\mathfrak{a}(u,v),
    $$
    for every $u\in D(L)$, $v\in D(\mathfrak{a})$, where $(\cdot,\cdot)_H$ denotes the scalar product in $H$. By the standard Theory (see e.g. \cite[Proposition 1.51]{Ou05}) the operator $-L$ generates a selfadjoint strongly continuous semigroup in $H$. Moreover, if the embedding
$$
D(\mathfrak a)\hookrightarrow H
$$
is compact, then $L$ has compact resolvent and its spectrum consists of
isolated eigenvalues of finite multiplicity. Hence there exists an
orthonormal basis $(\psi_k)_{k\in\mathbb N}$ of eigenfunctions and
$$e^{-tL}f=
\sum_{k=1}^{\infty}
e^{-t\lambda_k}(f,\psi_k)_H\psi_k
\quad\text{in}\quad H.
$$
If, in addition, $(e^{-tL})_{t>0}$ admits an integral kernel
$p_L(t,x,y)$ and is trace class, then
$$
p_L(t,x,y)=\sum_{k=1}^{\infty}
e^{-t\lambda_k}\psi_k(x)\psi_k(y)
$$
in $L^2(D\times D)$.
If, in addition, $(e^{-tL})_{t>0}$ admits a continuous integral kernel
$p_L(t,x,y)$, Mercer's formula
\cite[Proposition 5.6.9]{Davies07} gives
$$
\operatorname{Tr}(e^{-tL})
=
\int_Dp_L(t,x,x)\,dx
=
\sum_{k=1}^{\infty}e^{-t\lambda_k}.
$$
Consequently, $(e^{-tL})_{t>0}$ is a
trace-class semigroup if and only if
$$
Z_D(t):=\int_Dp_L(t,x,x)\,dx<\infty
$$
for every $t>0$.
\end{remark}

Another class of semigroups enjoying good spectral properties is the class of  \textit{irreducible semigroups}. We state this notion in the setting of $L^2$-spaces referring to \cite[Chapt. 2, Sect. 2]{Ou05} for a more complete overview.

\begin{definition}
    Let $\mu$ be a $\sigma$-finite measure and $(T(t))_{t\ge 0}$ be a positive $C_0$ semigroup in $L^2(D,\mu)$. Then, $T(t)$ is irreducible if, for every nonzero $f\in L^2(D)$, $f\ge 0$, it holds that $T(t)f(x)>0$ for $\mu$-almost every $x\in D$.
\end{definition}

\subsection{The Schr\"odinger semigroup in $L^2(\Omega)$ with Kato class potential}

In this Subsection we recall some fundamental results of the semigroup generated by $-\mathcal{H}_V$ in $L^2(\Omega)$ with Dirichlet conditions. To do this, we need some tools from stochastic processes in order to define the Dirichlet-Schr\"odinger semigroup in $L^2(\Omega)$. We refer for more details to (see \cite[Chapter 1, Section 1.6]{ChuZha95})

\begin{definition}
Let $(X_s)_{s\geq0}$ be the diffusion process generated by
$\operatorname{div}(A\nabla)$, and let
$$
\tau_\Omega:=\inf\{s>0:X_s\notin\Omega\}
$$
be its first exit time from $\Omega$. By the Feynman-Kac formula for
the Dirichlet Schrödinger semigroup, for every nonnegative
$f\in L^2(\Omega)$ we have
\begin{equation}
\label{eq:feynmakac}
(e^{-tH_V^\Omega}f)(x)
=
\mathbb E_x\left[
\exp\left(-\int_0^t V(X_s)\,ds\right)
f(X_t)\mathbf 1_{\{t<\tau_\Omega\}}
\right].
\end{equation}
\end{definition}

Now, let $p_A^\Omega$ denote the Dirichlet heat kernel associated with
$\operatorname{div}(A\nabla)$ in $\Omega$. 
Since by Assumptions \ref{As1} and \ref{As3} one has $V\geq c_{V,\Omega}$ in $\Omega$ for some $c_{V,\Omega}\in\R$, it follows
that
$$
0\leq e^{-tH_V^\Omega}f
\leq e^{-c_{V,\Omega}t}e^{t\operatorname{div}(A\nabla)^\Omega}f.
$$
Consequently,
$$
0\leq p_V^\Omega(t,x,y)
\leq e^{-c_{V,\Omega}t}p_A^\Omega(t,x,y)
\leq e^{-c_{V,\Omega}t}p_A(t,x,y),
$$
where
\begin{equation}
\label{eq:gaussAkernel}
p_A(t,x,y)
=
\frac{1}{\sqrt{\det A}}\frac{1}{(4\pi t)^{N/2}}
\exp\left(
-\frac{|A^{-1/2}(x-y)|^2}{4t}
\right).
\end{equation}
In particular, for every $t>0$ the operator $(e^{-tH_V^\Omega})_{t>0}:L^2(\Omega)\rightarrow L^\infty(\Omega)$ is bounded. By self-adjointness, immediately follows that for every $t>0$ the operator $(e^{-tH_V^\Omega})_{t>0}:L^1(\Omega)\rightarrow L^\infty(\Omega)$ is bounded and the semigroup enjoys the following ultracontractivity estimate
\begin{equation}
\label{eq:ultracontractivity}
\left\|e^{-t\mathcal{H}^\Omega_V}\right\|_{\mathcal{B}(L^1(\Omega),L^\infty(\Omega))}=\sup_{x,y\in\Omega} |p^\Omega_V(t,x,y)|\le C\frac{e^{-c_{V,\Omega} t}}{\min\{1,t^{N/2}\}}\quad\,t>0.
\end{equation}
Moreover, the semigroup $(e^{-t\mathcal{H}^\Omega_V})_{t>0}$ is also trace class by the Golden-Thompson-Symanzik estimate, (see e.g. \cite[Theorem 9.2]{Simon79})
\begin{equation}
\label{eq:gtsestimate}
\operatorname{Tr}(e^{-t\mathcal{H}^\Omega_V})\le \operatorname{Tr}(e^{t\operatorname{div}(A\nabla)^\Omega}e^{-tV})\le \frac{C}{\min\{1,t^{N/2}\}}\int_\Omega e^{-tV(x)}dx,\quad t>0,
\end{equation}
where the right-hand side is finite thanks to Assumption \ref{As3}.

\vspace{0.2 cm}

\subsection{The spectrum of $\mathcal{H}^\Omega_V$ in $L^2(\Omega)$}

	 Now, we consider the bilinear form
     $$
	 {\mathcal E_V}(u,v) := \int_{\Omega} A\nabla u \cdot \nabla v\, dx+\int_{\Omega}Vuvdx,
	 $$
	 and we set $\mathcal{Q}_V(u):=\mathcal{E}_V(u,u)$.
	  Using the assumptions on $A$ and $V$ we have that 
	  \begin{equation}
			\mathcal{Q}_V(u)\ge a_1\int_{\Omega}|\nabla u|^2dx+c_{V,\Omega}\int_\Omega u^2dx,
	\end{equation}
with $c_{V,\Omega}:=\inf_{x\in\Omega} V(x)>-\infty$. Now, if we assume that the set $\Omega$ supports the Poincarè inequality with positive spectrum
$$
\int_\Omega \varphi^2dx\le\frac{1}{\lambda_{1,0}(\Omega)}\int_\Omega|\nabla\varphi|^2dx,
$$
for every $\varphi\in H^1_0(\Omega)$, being $\lambda_{1,0}(\Omega)>0$ the first eigenvalue of $-\Delta^\Omega$, we can write
\begin{equation*}
\mathcal{Q}_V(u)\ge a_1\lambda_{1,0}(\Omega)\int_\Omega u^2dx+c_{V,\Omega}\int_{\Omega}u^2dx
\end{equation*}
which implies that the form $\mathcal{Q}_V$ is bounded below. In particular, if $a_1\lambda_{1,0}(\Omega)+c_{V,\Omega}>0$, $\mathcal{Q}_V$ is coercive.

	 By the trace class condition \eqref{eq:gtsestimate} satisfied by $(e^{-t\mathcal{H}^\Omega_V})_{t>0}$ we have that the embedding
	\begin{equation}
		\label{eq:compactembedding}
		D(\mathcal{Q}_V):=H^1_0(\Omega)\cap L_V^2(\Omega)\hookrightarrow L^2(\Omega),
	\end{equation}
	being $L^2_V(\Omega):=\left\{w\in L^2(\Omega),\,\sqrt{V_+}w\in L^2(\Omega)\right\}$, is compact and the spectrum is discrete. Therefore, by the boundedness from below of $\mathcal{Q}_V$ the eigenvalues of $\mathcal{H}_V$ define an increasing and positively diverging sequence $$\lambda_{1,V}(\Omega)\le\lambda_{2,V}(\Omega)\le\ldots\lambda_{k,V}(\Omega)\le\ldots,$$ and we can write $\lambda_{1,V}(\Omega)$ in terms of the Rayleigh quotient
	$$
	\lambda_{1,V}(\Omega)=\inf_{u\in D(\mathcal{Q}_V)\setminus\{0\}}\frac{\mathcal{Q}_V(u)}{\left\|u\right\|^2_{L^2(\Omega)}}.
	$$
 We refer to \cite{MazShu} for a complete characterization of the spectrum of $\mathcal{H}_V$.

 We conclude this Section with the following results that allow to obtain further information on the spectral properties of the Schr\"odinger semigroup 

 \begin{proposition}
\label{prop:irrschdirsem}
    Let $D\subset\R^N$ an open connected set and $(e^{-t\mathcal{H}^D_V})_{t>0}$ the semigroup generated by $-\mathcal{H}_V$ with Dirichlet condition in $L^2(D)$. Then, $(e^{-t\mathcal{H}^D_V})_{t>0}$ is positive and irreducible.
\end{proposition}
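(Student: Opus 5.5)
Positivity and irreducibility both follow from the Feynman--Kac representation \eqref{eq:feynmakac} combined with irreducibility of the Dirichlet heat semigroup of $\operatorname{div}(A\nabla)$ on a connected open set.

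\emph{Positivity.} Let $f\in L^2(D)$ with $f\ge 0$. In \eqref{eq:feynmakac} the integrand
$$
\exp\Big(-\int_0^t V(X_s)\,ds\Big)f(X_t)\mathbf 1_{\{t<\tau_D\}}
$$
is nonnegative pathwise, so $e^{-t\mathcal{H}^D_V}f\ge 0$ almost everywhere. The representation itself is justified for Kato decomposable $V$ by the standard theory: $V^-\in\mathcal K(D)$ gives Khas'minskii's lemma, so $\sup_x\mathbb E_x\exp(\int_0^t V^-(X_s)ds)<\infty$, and $V^+\in L^1_{\rm loc}$ makes the exponential well defined. Alternatively, positivity follows from the first Beurling--Deny criterion, since $u\in D(\mathcal Q_V)$ implies $u^+\in D(\mathcal Q_V)$ and $\mathcal E_V(u^+,u^-)=0$.

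\emph{Irreducibility.} The key step is a pointwise lower bound by the unperturbed Dirichlet heat kernel. Because $x\mapsto e^{-x}$ is convex, Jensen's inequality under $\mathbb E_x[\,\cdot\,;\,t<\tau_D, X_t\in dy]$ (equivalently, under the Brownian bridge) gives
$$
p^D_V(t,x,y)\ \ge\ p^D_A(t,x,y)\exp\Big(-\frac{\mathbb E^{t}_{x,y}\big[\int_0^t V(X_s)ds;\,t<\tau_D\big]}{p_A^D(t,x,y)}\Big).
$$
The exponent is finite for almost every $(x,y)$ because $V^+\in L^1_{\rm loc}$ and $V^-$ is Kato; the expected integral is controlled by $\int_0^t\!\int_D p_A^D(s,x,z)|V(z)|p_A^D(t-s,z,y)\,dz\,ds$. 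The Dirichlet heat kernel $p_A^D(t,x,y)$ of $\operatorname{div}(A\nabla)$ (constant elliptic $A$) is strictly positive on $D\times D$ because $D$ is connected; this follows from the parabolic Harnack inequality chained along a path of balls joining $x$ to $y$ inside $D$. Hence $p_V^D(t,x,y)>0$ for almost every $(x,y)\in D\times D$. Then for $0\le f\not\equiv 0$,
$$
e^{-t\mathcal H_V^D}f(x)=\int_D p_V^D(t,x,y)f(y)\,dy>0
$$
for almost every $x$, by Fubini.

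If the Jensen step proves delicate because of $V^+$ unbounded, I would use the form-theoretic criterion in \cite[Theorem 2.10]{Ou05} instead. Irreducibility is equivalent to the fact that the only closed ideals invariant under the semigroup are of the form $L^2(\omega)$ with $\mathbf 1_\omega D(\mathcal Q_V)\subset D(\mathcal Q_V)$ and $\mathcal E_V(\mathbf 1_\omega u,\mathbf 1_{D\setminus\omega}u)=0$. For such $\omega$, $\mathbf 1_\omega u\in H^1_0(D)$ for all $u\in C_c^\infty(D)$; this forces $\nabla\mathbf 1_\omega=0$ in distributions on $D$, so connectedness gives $|\omega|=0$ or $|D\setminus\omega|=0$. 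The potential term does not enter the ideal condition, since $\int V\mathbf 1_\omega\mathbf 1_{D\setminus\omega}u^2=0$ automatically. This local argument avoids pointwise kernel bounds and is the route I would actually write down, with the Feynman--Kac argument used only for positivity.

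The main obstacle is making the invariance-of-$H^1_0$ step rigorous: showing that multiplication by $\mathbf 1_\omega$ preserving $D(\mathcal Q_V)=H^1_0(D)\cap L^2_V(D)$ implies $\mathbf 1_\omega$ is locally constant. I would test against $u=\varphi\in C_c^\infty(B)$ with $\varphi\equiv1$ on a smaller ball, so that $\mathbf 1_\omega\in H^1(B')$ locally. The Sobolev functions taking values in $\{0,1\}$ have zero gradient almost everywhere, so $\mathbf 1_\omega$ is constant on each ball, and a chaining argument over balls in the connected set $D$ concludes.
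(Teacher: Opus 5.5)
Your argument is correct. Positivity is handled essentially as in the paper, via the Feynman--Kac bound $0\le e^{-t\mathcal{H}^D_V}f$, and your Beurling--Deny check is a valid alternative. For irreducibility, however, you take a genuinely different route. The paper simply imports irreducibility from the Perron--Frobenius theory of Schr\"odinger semigroups in \cite[Chapter XIII.12]{ReedSimonIV}. That is shorter, but it delegates the whole content to the reference, including whether it covers Dirichlet conditions on an arbitrary connected open set with a Kato decomposable potential.

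Your invariant-ideal argument via \cite{Ou05} is self-contained and isolates the mechanism. Since $V^+\in L^1_{\rm loc}(D)$, you have $C_c^\infty(D)\subset D(\mathcal{Q}_V)$. So $\mathbf 1_\omega D(\mathcal{Q}_V)\subset D(\mathcal{Q}_V)$ forces $\mathbf 1_\omega\in H^1_{\rm loc}(D)$, Stampacchia's theorem gives $\nabla\mathbf 1_\omega=0$ a.e., and connectedness concludes. The potential plays no role beyond admitting test functions, and neither kernel bounds, Harnack chains, nor the Kato condition on $V^-$ enter this step.

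One step is left implicit. Your criterion rules out nontrivial invariant closed ideals, whereas the paper defines irreducibility pointwise: $e^{-t\mathcal{H}^D_V}f>0$ a.e.\ for every $t>0$ and every $0\le f\not\equiv 0$. The two coincide here because a positive irreducible holomorphic semigroup (here self-adjoint and bounded below) is positivity improving, a standard fact in \cite{Ou05}. You should state this explicitly.

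On the Jensen route you keep as a fallback: as written it fails on unbounded $D$. Under the paper's hypotheses $V$ may grow faster than any Gaussian; for example, $V(x)=e^{|x|^3}$ on $D=\R^N$ satisfies \ref{As1}--\ref{As3}. Then $\int_0^t\int_D p_A^D(s,x,z)V^+(z)p_A^D(t-s,z,y)\,dz\,ds=+\infty$ for every $(x,y)$, and your lower bound degenerates to $0$; local integrability of $V^+$ alone does not control this quantity.

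The fix is to localize first. Take bounded connected open sets $D_k$ with $\overline{D_k}\subset D$ exhausting $D$; then $p_V^D\ge p_V^{D_k}\ge p_{V^+}^{D_k}$ by domain monotonicity and $V\le V^+$. Integrating the corresponding occupation integral over $(x,y)\in D_k\times D_k$ gives at most $t\|V^+\|_{L^1(D_k)}$. This yields the a.e.\ finiteness you need.
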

\begin{proof}
    The positivity is a consequence of the first inequality in \eqref{eq:gaussAkernel}. The irreducibility is proved in \cite[Chapter XIII.12]{ReedSimonIV}.
\end{proof}

\begin{remark}
    Since $(e^{-t\mathcal{H}_V^D})_{t>0}$ is a positive, selfadjoint and irreducible semigroup in $L^2(D)$ it follows that $$\omega_0=s(-\mathcal{H}^D_V):=
	\sup \{\operatorname{Re} \lambda : \lambda \in \sigma(-\mathcal{H}^D_V) \}=-\lambda_{1,V}(D)$$ where $\lambda_{1,V}(D)$ is simple by Krein-Rutman Theorem. See, for instance, \cite[Proposition~3.4, Chapter~VI]{EngelNagel}.
\end{remark}

\section{Log-concavity of the heat kernel}
\label{sec:proof}

In this Section we give the proof of Theorem \ref{teo:maintheorem}. As already said in Section \ref{sec:intro}, the constancy assumption on the second order coefficients is sharp in order to ensure the log concavity of the heat kernel as given by the following Theorem (See \cite[Theorem 1.2, Proposition 1.3]{Koles01}).

\begin{theorem}
\label{th:kolesnikov}
    Let $Q(x)=(q_{ij}(x))_{i,j=1,\ldots N}$ and $\beta(x)=(\beta_h(x))_{h=1,\ldots,N}$ be such that $q_{ij}$, $\beta_h\in C^{2,\delta}(B_R)$ for some $\delta>0$ and for every $R>0$, $i,j,h=1,\ldots, N$. Let $L=\operatorname{Tr}(Q(\cdot)D^2)+\beta(\cdot)\cdot\nabla$ and let $(e^{tL})_{t>0}$ be the associated semigroup. Then,  $(e^{tL})_{t>0}$ preserves log-concavity for every $t>0$ if, and only if, $Q$ is constant and $\beta$ is affine.
\end{theorem}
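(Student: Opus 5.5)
The plan is to treat the two implications separately. The sufficiency direction reduces to an explicit Gaussian (Mehler-type) representation combined with Prékopa's theorem. The necessity direction comes from a first-order-in-time expansion of $\log e^{tL}f$ for log-affine initial data.

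\emph{Sufficiency.} Assume $Q$ is constant (and positive semidefinite, as implicit in $L$ generating a Markov semigroup) and $\beta(x)=Bx+b$ is affine. Then $L$ is an Ornstein--Uhlenbeck type operator, and the semigroup has the explicit form
$$
e^{tL}f(x)=\int_{\R^N} f\big(e^{tB}x+c_t+y\big)\,\gamma_t(y)\,dy ,
$$
where $c_t=\int_0^t e^{sB}b\,ds$ and $\gamma_t$ is the centered Gaussian density with covariance $\Sigma_t=2\int_0^t e^{sB}Qe^{sB^*}ds$ (degenerate Gaussians are handled by approximation $Q+\varepsilon I$).
\begin{itemize}
\item If $f$ is log-concave, the map $(x,y)\mapsto f(e^{tB}x+c_t+y)\gamma_t(y)$ is jointly log-concave on $\R^{2N}$. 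It is the product of a log-concave function composed with an affine map and a log-concave Gaussian.
\item By Theorem \ref{th:Pltheorem}, applied in Prékopa's form stating that marginals of log-concave functions are log-concave, the integral in $y$ is log-concave in $x$.
\end{itemize}
Hence $e^{tL}$ preserves log-concavity.

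\emph{Necessity.} Test the semigroup on log-affine data $f(x)=e^{s\,p\cdot x}$, with $p\in\R^N$ and $s\in\R$. Such $f$ is log-concave, so $w(t,\cdot):=\log e^{tL}f$ must satisfy $D^2w(t,\cdot)\le 0$ for $t>0$. Since $w(0,\cdot)=s\,p\cdot x$ has vanishing Hessian, this forces $\partial_t D^2 w|_{t=0}\le 0$. The function $w$ solves
$$
w_t=\operatorname{Tr}(QD^2w)+Q\nabla w\cdot\nabla w+\beta\cdot\nabla w .
$$
At $t=0$, where $\nabla w=sp$ and $D^2w=0$, this gives
$$
s^2D^2\big(Q(x)p\cdot p\big)+s\,D^2\big(\beta(x)\cdot p\big)\le 0
\qquad\text{for all } s\in\R,\ p\in\R^N .
$$
\begin{itemize}
\item Letting $s\to0^\pm$ yields $D^2(\beta\cdot p)=0$ for every $p$, so $\beta$ is affine.
\item Then $x\mapsto Q(x)p\cdot p$ is concave on all of $\R^N$ and bounded below by $0$. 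A concave function on $\R^N$ that is bounded below is constant, so $Q$ is constant.
\end{itemize}

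\emph{Main obstacle.} The delicate step is justifying the time derivative of $D^2w$ at $t=0$ pointwise for the unbounded datum $e^{s p\cdot x}$. I would first localize: replace $f$ by $e^{s p\cdot x}\chi(x)$, with $\chi$ a log-concave cutoff equal to $1$ near a fixed point $x_0$, for instance a Gaussian with large variance. I would then use the $C^{2,\delta}$ regularity of the coefficients and interior Schauder estimates to get $e^{tL}f=f+tLf+o(t)$ in $C^2$ near $x_0$. The Gaussian cutoff contributes a term $-\varepsilon I$ to the Hessian, and this term disappears as the variance tends to infinity.
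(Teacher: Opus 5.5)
The paper gives no proof of this statement; it is quoted from \cite[Theorem 1.2, Proposition 1.3]{Koles01}. Your sufficiency argument (Mehler representation plus Prékopa's marginal theorem) is correct. The algebra in the necessity part is also correct: the equation for $w=\log e^{tL}f$, the inequality $s^2D^2(Qp\cdot p)+sD^2(\beta\cdot p)\le 0$, the limits $s\to0^\pm$, and the fact that a nonnegative concave function on $\R^N$ is constant. The gap is in the localization you propose for the step you yourself flag as delicate.

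With the Gaussian cutoff $f_\varepsilon(x)=e^{s\,p\cdot x-\varepsilon|x-x_0|^2}$ you have $D^2\log f_\varepsilon=-2\varepsilon I$, hence
\[
D^2w^\varepsilon(t,x_0)=-2\varepsilon I+t\,M_\varepsilon+o_\varepsilon(t),\qquad M_\varepsilon\to D^2\bigl(s^2Qp\cdot p+s\,\beta\cdot p\bigr)(x_0).
\]
For fixed $\varepsilon$ the right-hand side is negative definite for all small $t$, whatever $M_\varepsilon$ is. So concavity of $w^\varepsilon(t,\cdot)$ gives no information on $M_\varepsilon$. The limits $t\to0$ and $\varepsilon\to0$ do not commute, and the $-2\varepsilon I$ term does not simply disappear. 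Coupling them, say $\varepsilon=o(t)$, would require the $o_\varepsilon(t)$ remainder to be uniform in $\varepsilon$. But your Schauder-type control goes through $\sup e^{tL}f_\varepsilon\le\sup f_\varepsilon=e^{s\,p\cdot x_0+s^2|p|^2/(4\varepsilon)}$, which blows up as $\varepsilon\to0$.

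The repair is to use a cutoff that is genuinely identically $1$ near $x_0$ and still log-concave, for example $\chi(x)=\exp\bigl(-(|x-x_0|-R)_+^3\bigr)$. It is $C^2$, and it is log-concave because $u\mapsto(u-R)_+^3$ is convex and nondecreasing. Then $g=e^{s\,p\cdot x}\chi$ is bounded and $\log g$ is affine on $B_R(x_0)$. Hence $t^{-1}(\log e^{tL}g-\log g)$ is concave on $B_R(x_0)$, and it converges pointwise there to $Lg/g=s^2Qp\cdot p+s\,\beta\cdot p$. Pointwise limits of concave functions are concave, so $s^2Qp\cdot p+s\,\beta\cdot p$ is concave on every ball, and your algebra concludes. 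This route needs only the pointwise generator limit $t^{-1}(e^{tL}g-g)(x)\to Lg(x)$, not a $C^2$-in-space expansion in time via Schauder estimates.
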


Now we are ready to prove our main result

\begin{proof}[Proof of Theorem \ref{teo:maintheorem}]
	
	We follow the computations done in \cites{Lieb}. Let $p_{V,r}(t,x,y)$ be the fundamental solution of 
	\begin{equation*}
		\begin{cases*}
			U_t+\mathcal{H}_VU=0\quad\text{in}\quad(0,+\infty)\times\Omega_r\times \Omega_r \\
			U(0,x,y)=\delta(x-y)\quad\text{in}\quad \Omega_r\times \Omega_r \\
			U(t,x,y)=0\quad\text{in}\quad(0,+\infty)\times\partial \Omega_r\times \Omega_r\\
			U(t,x,y)=0\quad\text{in}\quad (0,+\infty)\times\Omega_r^c\times \Omega_r\cup (0,+\infty)\times\Omega_r\times \Omega^c_r,
		\end{cases*}
	\end{equation*}
	where $\Omega_r$ is the convex sum of $\Omega_0$ and $\Omega_1$ and $\mathcal{H}_{V,r}:=\mathcal{H}_V^{\Omega_r}$ denotes the realization of $\mathcal{H}_V$ with Dirichlet boundary conditions on $\Omega_r$.  
    With this notation in force the heat kernel of $-\mathcal{H}_{V,r}$ is given by the Trotter perturbation formula, (see e.g. \cite[Corollary 5.8]{EngelNagel00})
    \begin{equation}
    \label{eq:Trotterformula}
	\begin{split}
	p_{V,r}(t,x,y)&=\lim_{n\to\infty}p_{V,r,n}(t,x,y) \\
    &:=\lim_{n\to\infty}\int_{\R^{N(n-1)}}\left(\prod_{k=1}^n p_{A}\left(\frac tn,x_k, x_{k-1}\right)e^{-\frac tnV(x_k)}\chi_{\Omega_r}(x_k)\right)dx_1\ldots dx_{n-1},
	\end{split}
    \end{equation}
	where $p_A$ is given as in \eqref{eq:gaussAkernel}, $x_0:=x$ and $x_n:=y$.
	
By the log-concavity of the integrand and Theorem \ref{th:Pltheorem}, the function
$r\mapsto Z_n(r,t)$ is log-concave for every $t>0$ and every $n\in\N$, which means that
	$$
	Z_n(r,t)\ge Z_n(0,t)^{1-r}Z_n(1,t)^{r},
	$$
	for every $r\in[0,1]$, $t>0$ and $n\in\N$. By \eqref{eq:Trotterformula} it follows that also the limit trace $Z(r,t)=\int_{\Omega_r}p_{V,r}(t,x,x)dx$ is log-concave in $r$ for every $t>0$.
    Since the semigroup $(e^{-t\mathcal{H}_{V,r}})_{t>0}$ is trace class, we have that
	$$
	Z(r,t)=\sum_{k=1}^\infty e^{-t\lambda_{k,V}(\Omega_r)}<\infty,
	$$
    for every $t>0$.
	 In particular
	$$
	\lambda_{1,V}(\Omega_r)=-\lim_{t\to\infty}\frac{\log Z(r,t)}{t}
	$$
	and $\lambda_{1}(r)$ is convex in $r$ since it is the pointwise limit of convex functions.
\end{proof}

\begin{example}[Kolmogorov Operators]
\label{example:Kolmogorov}
		Let $A$, $B\in\R^{N,N}$ be constant and symmetric matrices such that $AB=BA$, with $A$ positive definite and $b_0\in\R^N$. Consider the Kolmogorov operator
		\begin{equation}
        \label{eq:kolmexa}
		\mathcal{L}=\operatorname{div}(A\nabla)-b\cdot\nabla,
		\end{equation}
		being $b(x):=Bx+b_0$, endowed with Dirichlet boundary condition on $\partial\Omega$ and we refer to it by $\mathcal{L}^\Omega$. We notice that the semigroup $(e^{t\mathcal{L}^\Omega})_{t>0}$ satisfies the hypotheses of Theorem \ref{th:kolesnikov}. Furthermore, the transformation
	\begin{equation}
		\label{eq:unitarytransform}
		\begin{split}
			U_\varphi:L^2(\Omega,e^{-2\varphi}\mathcal{L}^N)\rightarrow L^2(\Omega) \\
			f\mapsto e^{-\varphi }f,
		\end{split}
	\end{equation}
	where 
    \begin{equation}
    \label{eq:quadratic}
    \varphi(x):=\frac{A^{-1}Bx\cdot x}{4}+\frac{A^{-1}b_0\cdot x}{2},
    \end{equation}
    defines an isometry between $L^2(\Omega,e^{-2\varphi}\mathcal{L}^N)$ and $L^2(\Omega)$. 
	
	Set $V_{\varphi}:=A\nabla\varphi\cdot\nabla\varphi-\operatorname{div}(A\nabla\varphi)=\frac {b\cdot A^{-1}b}{4}-\frac{\operatorname{Tr(B)}}{2}$. It is easy to check that $\mathcal{H}_{V_\varphi}(e^{-\varphi})=0$ in $\R^N$, and so the isometry $U_\varphi$ yields $$-\mathcal{L}^\Omega=U_{\varphi}^{-1}\mathcal{H}^\Omega_{V_\varphi}U_\varphi.$$ Since $A$ and $V_\varphi$ fulfill the hypotheses given in Section \ref{sec:preliminaries} and $\mathcal{L}^\Omega$ has the same spectrum as of $\mathcal{H}^\Omega_{V_\varphi}$, Theorem \ref{teo:maintheorem} applies to the first Dirichlet eigenvalue of $-\mathcal{L}$. In particular, when $A=B=Id$ and $b_0=0$ we have that $V_\varphi(x)=\frac{|x|^2}{4}-\frac N2$, which is the potential of the shifted harmonic oscillator. In this case $\mathcal{L}^\Omega$ reduces to the Dirichlet Ornstein-Uhlenbeck operator $\Delta^\Omega_\gamma$ and we recover the Brunn-Minkowski inequality for $\lambda_{1,\gamma}(\Omega)$ proved in \cites{CFLS, CLS}.
\end{example}

\begin{example}[Schr\"odinger operator with singular potential]

Let $N\ge 3$, $\Omega\subset\R^N$ a bounded convex set and $V(x):=\frac{1}{d(x,\partial\Omega)^2}$. Consider the associated quadratic form $\mathcal{Q}_{V}$ which is coercive thanks to the Poincaré inequality. By the boundary Hardy inequality for convex domains (see
\cite[Theorem 11]{MarMizPin98}), we have
$$
\int_\Omega
V|u|^2\,dx
\le
4\int_\Omega|\nabla u|^2\,dx,
\qquad u\in H_0^1(\Omega).
$$
it follows that $D(\mathcal{Q}_V)=H^1_0(\Omega)$, and the Rellich-Kondrachov Theorem ensures the compactness of the resolvent and hence the discreteness of the spectrum. Finally, it is an easy task to check that $V$ satisfies Assumptions \ref{As1}, \ref{As2}, \ref{As3}. Therefore, Theorem \ref{teo:maintheorem} holds true.
    
\end{example}

\subsection{Log-concavity of the ground state}
\label{sec:logconceig}

In this subsection we prove, as a consequence of Theorem \ref{teo:maintheorem}, the log-concavity of the first eigenfunction $\psi_{1,V}$ of $\mathcal{H}^\Omega_V$.  
As a consequence of Example \ref{example:Kolmogorov} we have that $U^{-1}_\varphi\psi_{1,V}=e^\varphi\psi_{1,V}$, where $U_\varphi$ denotes the isometry \eqref{eq:unitarytransform}, is the first Dirichlet eigenfunction for the Kolmogorov operator $-\operatorname{div}(A\nabla)+b\cdot\nabla$. In the particular case $A=Id$ and $b(x)=x$ the log-concavity of the ground state of $-\Delta_\gamma^\Omega$ has been proved in \cites{colesanti, CFLS, CLS}. Moreover, in the very recent paper \cite{Qin26} the author proves that $e^\varphi\psi_{1,V}$ enjoys a strong log-concavity property if $\Omega$ is bounded and convex. In all these papers, the set $\Omega$ is assumed to be bounded because of the unboundedness of the weight $e^\varphi$. Here, we prove the log-concavity of $\psi_{1,V}$ skipping the boundedness assumption on $\Omega$ and exploiting the ultracontractivity of the semigroup $(e^{-t\mathcal{H}^\Omega_V})_{t>0}$ which is not preserved by the isometry \eqref{eq:unitarytransform} and does not hold for Ornstein-Uhlenbeck type semigroups without the further assumption of \textit{intrinsic} ultracontractivity. We refer e.g. to the work \cite{DavSim84}.

Before to prove Theorem \ref{th:logconcdireig}, we recall the following result due to Prékopa, \cite[Theorem 6]{prekopa}.

\begin{theorem}
\label{th:prekopaint}
    Let $N_1,N_2\in\N$,  $A\subseteq\R^{N_1}$, $B\subseteq\R^{N_2}$ two convex sets and $f:A\times B\rightarrow (0,\infty)$ a log-concave function. Then, the function
    $$
    F(x):=\int_B f(x,y)dy,
    $$
    is log-concave in $A$.
\end{theorem}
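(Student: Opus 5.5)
The plan is to reduce the statement to the Prékopa--Leindler inequality, Theorem \ref{th:Pltheorem}, applied in the $y$-variables on the convex set $B$, with the three functions chosen as the sections of $f$ at three collinear points of $A$.

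First, fix $x_0,x_1\in A$ and $r\in[0,1]$, and set $x_r:=(1-r)x_0+rx_1$, which lies in $A$ by convexity. On $B$ define
$$
f_0(y):=f(x_0,y),\qquad g_1(y):=f(x_1,y),\qquad h_r(y):=f(x_r,y).
$$
These are strictly positive by assumption. They are measurable because a log-concave function has convex superlevel sets, so $f$ is Borel up to a null set; by Tonelli's theorem its sections are then measurable for almost every point, and I will argue that in fact every section is measurable, again using convexity of the superlevel sets. Tonelli also guarantees that $F$ is well defined with values in $(0,+\infty]$.

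Next, I check hypothesis \eqref{eq:fgh}. For $y_0,y_1\in B$ we have $(1-r)y_0+ry_1\in B$, and
$$
(1-r)(x_0,y_0)+r(x_1,y_1)=\bigl(x_r,(1-r)y_0+ry_1\bigr).
$$
Log-concavity of $f$ on the convex set $A\times B$ therefore gives
$$
h_r\bigl((1-r)y_0+ry_1\bigr)\ge f_0(y_0)^{1-r}g_1(y_1)^r.
$$
This is exactly \eqref{eq:fgh} on $\Omega=B$. Theorem \ref{th:Pltheorem} then yields
$$
F(x_r)=\int_B h_r\,dy\ \ge\ \Bigl(\int_B f_0\,dy\Bigr)^{1-r}\Bigl(\int_B g_1\,dy\Bigr)^{r}=F(x_0)^{1-r}F(x_1)^r .
$$
Since $x_0,x_1$ and $r$ are arbitrary, $F$ is log-concave on $A$.

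The main obstacle is technical rather than conceptual, and it has three parts.
\begin{enumerate}
\item Measurability of the sections has to be justified, as described above.
\item The case $F=+\infty$ at some point must be handled. I interpret the inequality in $[0,+\infty]$, and I expect to show that finiteness propagates along segments, so that the log-concavity statement stays meaningful.
\item If $B$ has empty interior in $\R^{N_2}$, then every integral over $B$ vanishes and the statement is trivial. Alternatively, one can integrate with respect to Lebesgue measure on the affine hull of $B$ and apply Theorem \ref{th:Pltheorem} in that lower dimension.
\end{enumerate}
Once these points are settled, the Prékopa--Leindler step above closes the argument.
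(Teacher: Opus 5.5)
The paper does not prove this statement; it quotes it from Prékopa (Theorem 6 there). Your argument is correct: you freeze $x_0,x_1,r$ and apply Theorem~\ref{th:Pltheorem} on $B$ to the three sections $f(x_0,\cdot)$, $f(x_1,\cdot)$, $f(x_r,\cdot)$. This is the standard derivation of the marginal theorem from the Prékopa--Leindler inequality. Compared with the citation, it makes the paper self-contained, since its only tool is Theorem~\ref{th:Pltheorem}, which the paper already states in the introduction.

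Some points to tidy up.
\begin{enumerate}
\item You need Theorem~\ref{th:Pltheorem} for a single fixed $r$. As typeset, \eqref{eq:fgh} is required for every $r\in[0,1]$, and your $h_r$ does not satisfy that. Read literally, the cases $r=0,1$ would force $h\ge f$ and $h\ge g$ and make the theorem trivial. So state explicitly that you use the genuine fixed-$r$ Prékopa--Leindler inequality.
\item Measurability is simpler than you suggest. For each fixed $x$, the section $y\mapsto f(x,y)$ is itself log-concave on $B$, so its superlevel sets are convex and hence Lebesgue measurable. No Tonelli argument is needed.
\item Your expectation in item 2, that finiteness propagates along segments, is false in the direction from the endpoints inward. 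Take $A=[0,1]$, $B=\R$, and let $f$ equal $1$ on $(0,1)\times\R$ and $e^{-|y|}$ on $\{0,1\}\times\R$. Then $f$ is log-concave, yet $F(0)=F(1)=2$ and $F(1/2)=+\infty$.
\end{enumerate}
You do not need that claim anyway. Log-concavity of a $(0,+\infty]$-valued $F$ just means $F(x_r)\ge F(x_0)^{1-r}F(x_1)^r$ in $[0,+\infty]$. If $\int_B f_0$ or $\int_B g_1$ is infinite, apply the standard (nonnegative, integrable) Prékopa--Leindler inequality to $\min\{f_0,M\}\chi_{\{|y|<M\}}$ and $\min\{g_1,M\}\chi_{\{|y|<M\}}$. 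These still satisfy \eqref{eq:fgh} together with $h_r$, and letting $M\to\infty$ by monotone convergence gives the inequality.
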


\begin{proof}[Proof of Theorem \ref{th:logconcdireig}]
	Consider the following Cauchy-Dirichlet problem
	\begin{equation}
	\label{eq:firsteigenfun}
	\begin{cases}
		u_t+\mathcal{H}_Vu=0\quad\text{in}\quad\Omega\times(0,\infty) \\
		u(x,t)=0\quad\text{in}\quad\partial\Omega\times(0,\infty) \\
		u(x,0)=f(x)\quad\text{in}\quad\Omega
	\end{cases}
	\end{equation}
	for some $f\in L^2(\Omega)$, $f\ge 0$, $f\not\equiv 0$, $f$ log-concave in $\Omega$. The solution to \eqref{eq:firsteigenfun} is then given by
	$$
	u(x,t)=e^{-t\mathcal{H}^\Omega_V}f(x)=\int_\Omega p^\Omega_V(t,x,y)f(y)dy.
	$$
    By Theorem \ref{th:prekopaint}, since the function $\Omega\times\Omega\ni (x,y)\mapsto p^\Omega_V(t,x,y)f(y)$ is log-concave for every $t>0$, so does $u(x,t)$ for every $x\in\Omega$, $t>0$.
	Using the spectral representation of $p^\Omega_V$ it follows that
	$$
	e^{\lambda_{1,V}(\Omega)t}u(x,t)= (f,\psi_{1,V})_{L^2(\Omega)}\psi_{1,V}(x)+\sum_{k=2}^\infty e^{-(\lambda_{k,V}(\Omega)-\lambda_{1,V}(\Omega))t}(f,\psi_{k,V})_{L^2(\Omega)}\psi_{k,V}(x).
	$$
	Therefore, by Cauchy-Schwarz inequality we have that
	\begin{equation*}
	|I(x,t)|\le M(t)\left\|f\right\|_{L^2(\Omega)}\left(\sum_{k=2}^{\infty}e^{-\lambda_{k,V}(\Omega)}\right)^{1/2}\left(\sum_{k=2}^{\infty}e^{-\lambda_{k,V}(\Omega)}|\psi_{k,V}(x)|^2\right)^{1/2},
	\end{equation*}
	where for every $t>0$ and every $x\in \Omega$ we have set $I(x,t):=e^{\lambda_{1,V}(\Omega)t}u(x,t)-( f,\psi_{1,V})_{L^2(\Omega)}\psi_{1,V}(x)$ and $M(t):=\sup_{k\ge 2} e^{-(\lambda_{k,V}(\Omega)-\lambda_{1,V}(\Omega))t+\lambda_{k,V}(\Omega)}$. In particular, since the sequence $(\lambda_{k,V}(\Omega))_{k\in\N}$ is increasing and the ultracontractivity estimate \eqref{eq:ultracontractivity} holds true, for every $t\ge 1$ we have that 
    \begin{equation}
    \label{eq:logconcfe}
    \begin{split}
    |I(x,t)|&\le e^{-(\lambda_{2,V}(\Omega)-\lambda_{1,V}(\Omega)) t+\lambda_{2,V}(\Omega)}\left\|f\right\|_{L^2(\Omega)}\sqrt{Z_\Omega(1)}\sqrt{p^\Omega_V(1,x,x)} \\
    &\le C(\left\|f\right\|_{L^2(\Omega)},\left\|e^{-V}\right\|_{L^1(\Omega)}\omega_0, a_1)e^{-(\lambda_{2,V}(\Omega)-\lambda_{1,V}(\Omega)) t+\lambda_{2,V}(\Omega)}.
    \end{split}
    \end{equation}
    Passing to the supremum with respect to $x\in\Omega$ in \eqref{eq:logconcfe} and letting $t\to\infty$ we have that $\psi_{1,V}$ is log-concave in $\Omega$ since is the uniform limit of a sequence of log-concave functions. To conclude, $\psi_{1,V}$ is also strictly positive since for every $t>0$ and every $x\in\Omega$ we have that
    $$
    \psi_{1,V}(x)=e^{\lambda_{1,V}(\Omega)t}(e^{-t\mathcal{H}^\Omega_V}\psi_{1,V})(x).
    $$
    Then, the result plainly follows by Proposition \ref{prop:irrschdirsem}.
	\end{proof}

        \begin{corollary}
        Assume all the hypotheses of Theorem \ref{th:logconcdireig} are satisfied and also that $\Omega$ is bounded. Then, the first Dirichlet eigenfunction of the Kolmogorov operator $-\mathcal{L}=-\operatorname{div}(A\nabla)+b\cdot\nabla$ is log-concave in $\Omega$.
    \end{corollary}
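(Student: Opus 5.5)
The plan is to transfer the log-concavity of $\psi_{1,V}$ from Theorem \ref{th:logconcdireig} to the Kolmogorov ground state through the isometry $U_\varphi$ of Example \ref{example:Kolmogorov}. Let $V:=V_\varphi=\frac{b\cdot A^{-1}b}{4}-\frac{\operatorname{Tr}(B)}{2}$.

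\textbf{Step 1: check the hypotheses on $V_\varphi$.} $V_\varphi$ is a quadratic polynomial, so it is locally bounded and hence Kato decomposable, which gives \ref{As2}. Since $\Omega$ is bounded, $e^{-tV_\varphi}$ is bounded on $\Omega$ and therefore integrable, which gives \ref{As3}.

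Convexity \ref{As1} requires the quadratic form $x\mapsto Bx\cdot A^{-1}Bx$ to be positive semidefinite. This is true because $AB=BA$ and $A^{-1}$ is positive definite: writing $B A^{-1}B=B^{T}A^{-1}B$, we get $BA^{-1}B\succeq0$.

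Theorem \ref{th:logconcdireig} therefore applies, and the ground state $\psi_{1,V}$ of $\mathcal{H}^\Omega_{V_\varphi}$ is strictly positive and log-concave in $\Omega$.

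\textbf{Step 2: conjugate to the Kolmogorov operator.} By the conjugation $-\mathcal{L}^\Omega=U_\varphi^{-1}\mathcal{H}^\Omega_{V_\varphi}U_\varphi$, the two operators are isospectral. The first Dirichlet eigenfunction of $-\mathcal{L}$ is, up to a constant, $e^{\varphi}\psi_{1,V}$. The first eigenvalue is simple by the Krein--Rutman remark, so this identification is unambiguous.

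Boundedness of $\Omega$ is used here. The weight $e^{-2\varphi}$ is then bounded above and below on $\Omega$, so $L^2(\Omega,e^{-2\varphi}\mathcal{L}^N)=L^2(\Omega)$ with equivalent norms. Likewise the Dirichlet form domains coincide, because $e^{\pm\varphi}$ is smooth and bounded together with its gradient on $\overline\Omega$. Hence the Dirichlet boundary condition is preserved under the transformation, and $e^{\varphi}\psi_{1,V}\in H^1_0(\Omega)$.

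\textbf{Step 3: conclude log-concavity.} We have
$$\log\bigl(e^{\varphi}\psi_{1,V}\bigr)=\varphi+\log\psi_{1,V}.$$
The term $\log\psi_{1,V}$ is concave by Step 1. The term $\varphi$ is a quadratic polynomial, which is not concave in general. This is the main obstacle: log-concavity of $\psi_{1,V}$ alone does not propagate when $\varphi$ is convex.

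To get past it, one must use more than plain log-concavity of $\psi_{1,V}$. The first approach would be to revisit the proof of Theorem \ref{th:logconcdireig} directly for the Kolmogorov semigroup. The kernel of $e^{t\mathcal{L}^\Omega}$ is $e^{\varphi(x)}p^\Omega_{V_\varphi}(t,x,y)e^{-\varphi(y)}$, and by Theorem \ref{th:kolesnikov} the semigroup $e^{t\mathcal{L}}$ preserves log-concavity, since $A$ is constant and $b$ is affine. The Trotter product with $\chi_\Omega$ preserves it as well. Starting from a log-concave datum $f$, Theorem \ref{th:prekopaint} then gives log-concavity of $e^{t\mathcal{L}^\Omega}f$.

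Finally, multiply by $e^{\lambda_1 t}$ and let $t\to\infty$. Boundedness of $\Omega$ makes the conjugated ultracontractivity bound uniform, because $e^{\pm\varphi}$ is bounded on $\Omega$. This gives uniform convergence to $e^{\varphi}\psi_{1,V}$ times a positive constant, and a uniform limit of log-concave functions is log-concave, which yields the claim.
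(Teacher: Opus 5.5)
Your final argument is correct and is the paper's proof: the paper also applies Theorem \ref{th:kolesnikov} to $e^{t\mathcal{L}}$ and then repeats the proof of Theorem \ref{th:logconcdireig}, and you add the Trotter product with $\chi_\Omega$, Theorem \ref{th:prekopaint}, and the large-time limit, using the boundedness of $e^{\pm\varphi}$ on the bounded set $\Omega$. Your Step 3 observation is also right: since $\varphi$ may be convex, log-concavity of $\psi_{1,V}$ does not pass to $e^{\varphi}\psi_{1,V}$ through $U_\varphi$, which explains why the direct conjugation has to be abandoned in favour of this route.
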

\begin{proof}
    By Theorem \ref{th:kolesnikov} the semigroup generated by $\mathcal{L}$ preserves log-concavity. Then, we argue as in the proof of Theorem \ref{th:logconcdireig}.
\end{proof}

    We conclude the paper with the following Proposition \ref{prop:strictlc} which is an enhancement of Theorem \ref{th:logconcdireig}. We refer the reader to \cite[Section 4]{CFLS} to remark how the geometrical assumptions therein can be partially rephrased in our framework in terms of the potential $V$, and to \cites{Kor83,KorLew87}, where alternative approaches to prove the strong log-concavity of $\psi_{1,V}$ relying on the
convexity maximum principle and constant-rank techniques are used.

\begin{proposition}
\label{prop:strictlc}
Assume in Theorem \ref{th:logconcdireig} that
$\Omega$ is a bounded domain of class
$C^{2,\alpha}$, and suppose that
$$
V\in C^{0,\alpha}(\overline{\Omega})
\cap C^{2,\alpha}_{\mathrm{loc}}(\Omega)
$$
for some $\alpha\in(0,1)$. Assume moreover that $V$ is strongly
convex in $\Omega$, namely
$$
D^2V(x)\xi\cdot\xi>0
\quad\text{for every}\quad x\in\Omega,\,\xi\in\R^N\setminus\{0\}.
$$
Then, the first Dirichlet eigenfunction $\psi_{1,V}$ is strongly
log-concave in $\Omega$, or, equivalently
$$
D^2(-\log\psi_{1,V})(x)\xi\cdot\xi>0\quad\text{for every}\quad x\in\Omega,\,\xi\in\R^N\setminus\{0\}.
$$
\end{proposition}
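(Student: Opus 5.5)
We work with $w:=-\log\psi_{1,V}$. Theorem \ref{th:logconcdireig} already gives that $\psi_{1,V}>0$ and that $w$ is convex in $\Omega$, so $D^2w\ge 0$. It remains to show that $D^2w$ cannot degenerate at an interior point. The strategy is a constant-rank / strong maximum principle argument of Korevaar--Lewis type (see \cites{Kor83,KorLew87}), applied to the equation satisfied by $w$.

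\textbf{Step 1 (regularity).} By Schauder theory, using $V\in C^{0,\alpha}(\overline\Omega)$ and $\partial\Omega\in C^{2,\alpha}$, we get $\psi_{1,V}\in C^{2,\alpha}(\overline\Omega)$. Interior bootstrapping with $V\in C^{2,\alpha}_{\rm loc}(\Omega)$ then gives $\psi_{1,V}\in C^{4,\alpha}_{\rm loc}(\Omega)$. Hence $w\in C^{4,\alpha}_{\rm loc}(\Omega)$. Dividing $-\operatorname{div}(A\nabla\psi)+V\psi=\lambda\psi$ by $\psi$ shows that $w$ satisfies
$$
\operatorname{Tr}(AD^2w)-A\nabla w\cdot\nabla w=\lambda_{1,V}(\Omega)-V \quad\text{in }\Omega .
$$
After the linear change of variables $x\mapsto A^{-1/2}x$ we may assume $A=Id$. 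This change preserves convexity of $\Omega$, strong convexity of $V$, and the form of the claim, so the equation becomes $\Delta w-|\nabla w|^2=\lambda-V$.

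\textbf{Step 2 (the degenerate direction, main obstacle).} Suppose by contradiction that there are $x_0\in\Omega$ and a unit vector $\xi$ with $D^2w(x_0)\xi\cdot\xi=0$. Since $D^2w\ge0$, this is the minimum of the smallest eigenvalue $\mu(x)$ of $D^2w$. Differentiate the equation twice in the direction $\xi$ and set $h:=w_{\xi\xi}\ge0$, which vanishes at $x_0$. This gives
$$
\Delta h-2\nabla w\cdot\nabla h-2|\nabla w_\xi|^2=-V_{\xi\xi}.
$$
At $x_0$, $h$ attains its minimum $0$, so $\nabla h(x_0)=0$ and $\Delta h(x_0)\ge0$. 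Moreover $D^2w(x_0)\xi=0$, because a positive semidefinite matrix with $\langle D^2w\,\xi,\xi\rangle=0$ kills $\xi$; hence $\nabla w_\xi(x_0)=D^2w(x_0)\xi=0$. Evaluating the equation at $x_0$ therefore yields $0\le\Delta h(x_0)=-V_{\xi\xi}(x_0)<0$ by strong convexity of $V$, a contradiction.

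The delicate point is exactly this use of a fixed direction. Since $w_{\xi\xi}$ is minimized at $x_0$ over all directions and points, we are allowed to fix $\xi$, so we do not need to handle the non-smooth eigenvalue function $\mu$. The quadratic term $|\nabla w_\xi|^2$ vanishes precisely because $\xi$ lies in the kernel of $D^2w(x_0)$. I would double-check this sign bookkeeping carefully.

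\textbf{Step 3 (conclusion).} The contradiction shows $D^2w(x)\xi\cdot\xi>0$ for all $x\in\Omega$ and $\xi\neq0$, which is the claimed strong log-concavity of $\psi_{1,V}$. Boundedness and $C^{2,\alpha}$ regularity of $\Omega$ are only needed in Step 1, to guarantee the interior $C^4$ regularity that justifies the differentiation in Step 2. No boundary analysis is required, since the statement is interior and $w\to+\infty$ at $\partial\Omega$.
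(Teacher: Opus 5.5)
Your proposal is correct and is essentially the paper's proof: Schauder regularity, then convexity of $w=-\log\psi_{1,V}$ from Theorem \ref{th:logconcdireig}, then evaluating the twice-differentiated equation at an interior zero of $w_{\xi\xi}$ with $\xi\in\ker D^2w(x_0)$, where the quadratic term vanishes and $V_{\xi\xi}(x_0)>0$ contradicts the minimum condition. The only differences are cosmetic: you normalize $A=Id$ by a linear change of variables and use a directional derivative, while the paper keeps $A$ general and rotates coordinates so that $\xi=e_1$; your sign bookkeeping agrees with \eqref{eq:Hessian_PDE}.
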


\begin{proof}
Since
$$
V\in C^{0,\alpha}(\overline{\Omega})
\cap C^{2,\alpha}_{\mathrm{loc}}(\Omega),
$$
the global $W^{2,p}$-regularity and the interior and boundary
Schauder estimates
\cite[Theorem 9.15 and Theorems 6.17, 6.19]{GilTru01} yield
$$
\psi_{1,V}\in
C^{4,\alpha}_{\mathrm{loc}}(\Omega)
\cap C^{2,\alpha}(\overline{\Omega}).
$$
Since $\psi_{1,V}>0$ in $\Omega$, the function
$$
w:=-\log\psi_{1,V}
$$
belongs to $C^{4,\alpha}_{\mathrm{loc}}(\Omega)$ and satisfies
\begin{equation}
\label{eq:hamiltonjacobi}
\operatorname{div}(A\nabla w)
=
A\nabla w\cdot\nabla w+\lambda_{1,V}(\Omega)-V
\qquad\text{in }\Omega.
\end{equation}

By Theorem \ref{th:logconcdireig}, $w$ is convex. Hence, setting
$W:=D^2w$, we have
\begin{equation}
\label{eq:semidefpos}
W(x)\xi\cdot\xi\geq 0
\quad\text{for every}\quad x\in\Omega, \xi\in \R^N.
\end{equation}
Differentiating \eqref{eq:hamiltonjacobi} twice, and using that $A$
is constant, we obtain
\begin{equation}
\label{eq:Hessian_PDE}
\operatorname{div}(A\nabla W_{ij})
-2A\nabla w\cdot\nabla W_{ij}
-2(WAW)_{ij}+V_{ij}=0
\qquad\text{in }\Omega
\end{equation}
for every $i,j=1,\ldots,N$.

Suppose by contradiction that $W(x_0)$ is not positive definite at
some $x_0\in\Omega$. After an orthogonal change of coordinates, we
may assume that
$$
W(x_0)e_1=0.
$$
By \eqref{eq:semidefpos}, the function
$x\mapsto W_{11}(x)$ is nonnegative and attains its minimum at
$x_0$. Therefore, necessary conditions of minimality imply:
$$
\nabla W_{11}(x_0)=0,
\qquad
\operatorname{div}(A\nabla W_{11})(x_0)\geq0.
$$
Moreover,
$$
(WAW)_{11}(x_0)
=
\bigl(W(x_0)e_1\bigr)\cdot
A\bigl(W(x_0)e_1\bigr)
=0.
$$
Evaluating \eqref{eq:Hessian_PDE} at $x_0$ with $i=j=1$, we obtain
$$
\operatorname{div}(A\nabla W_{11})(x_0)
=
-V_{11}(x_0)<0,
$$
which contradicts the preceding minimum condition. Consequently,
$$
W(x)\xi\cdot\xi>0
\quad\text{for every}\quad x\in\Omega,\,\xi\in\R^N\setminus\{0\}.
$$
Thus $w$ is strongly convex, hence
$\psi_{1,V}$ is strongly log-concave in $\Omega$.
\end{proof}

\end{document}